\documentclass[12pt, oneside, psamsfonts]{amsart}

\newif\ifPDF
\ifx\pdfoutput\undefined\PDFfalse
\else \ifnum \pdfoutput > 0 \PDFtrue
        \else \PDFfalse
        \fi
\fi

\usepackage[centertags]{amsmath}
\usepackage{amsfonts}
\usepackage{mathrsfs}
\usepackage{textcomp}
\usepackage{amssymb}
\usepackage{amsthm}
\usepackage{newlfont}
\usepackage[all]{xy}

\ifPDF
  \usepackage[pdftex]{color, graphicx}
  \usepackage[pdftex, bookmarks, colorlinks]{hyperref}
  \hypersetup{colorlinks=false}

\else
  \usepackage{color}
  \usepackage[dvips]{graphicx}
  \usepackage[dvips]{hyperref}
\fi

\usepackage{fixltx2e}
\usepackage{tkz-graph}
\tikzset{EdgeStyle/.style = {->}}
\tikzset{LabelStyle/.style= {fill=yellow}}
\usetikzlibrary{shapes,snakes,calendar,matrix,backgrounds,folding}

\usepackage[scale=0.8]{geometry}

\theoremstyle{plain}
\newtheorem{thm}{Theorem}[section]
\newtheorem{cor}[thm]{Corollary}

\newtheorem{lem}[thm]{Lemma}

\theoremstyle{definition}
\newtheorem{rem}[thm]{Remark}

\begin{document}

\title{On generators of $\mathrm{C}_r^*(\mathbb{F}_2)$}

\author{George A. Elliott}
\address{Department of Mathematics, University of Toronto, Toronto, Ontario, Canada~\ M5S 2E4}
\email{elliott@math.toronto.edu}

\author{Chun Guang Li}
\address{
School of Mathematics and Statistics, Northeast Normal University, Changchun, 130024, P. R.~China
}
\email{licg864@nenu.edu.cn}

\author{Zhuang Niu}
\address{Department of Mathematics, University of Wyoming, Laramie, WY, 82071, USA}
\email{zniu@uwyo.edu}

\begin{abstract}
It is shown that the reduced group C*-algebra $\mathrm{C}_r^*(\mathbb{F}_2)$ is singly generated, where $\mathbb F_2$ is the free group on two generators. With respect to the generator constructed in the paper, the topological free entropy dimension of  $\mathrm{C}_r^*(\mathbb{F}_2)$ is shown to be $2$.
\end{abstract}

\subjclass[2000]{46L05.} \keywords{Reduced group C*-algebras, free group, generator}

\thanks{G.A.E.~is supported by an NSERC Discovery Grant. C.G.L.~is supported by an NNSF Grant of China (No. 12671150). Z.N.~is supported by a Simons Foundation Grant (MP-TSM-00002606).}

\date{\today}
\maketitle

\section{Introduction}
In \cite{Kadison-list}, among other questions, Kadison asked whether every von Neumann algebra acting on a separable Hilbert space is singly generated (see \cite{Ge_2003} for a survey of Kadison's problem list). Many von Neumann algebras are known to be singly generated, and the problem has been reduced to the $\mathrm{II}_1$ factor case (\cite{Willig_1974}), but it is still open in this case. For C*-algebras, the same question has been asked: whether every simple separable C*-algebra is singly generated. (Without simplicity, C*-algebras such as $\mathrm{C}(S^2)$ provide examples which are not singly generated; see \cite{Nagisa-04}.) Stable C*-algebras, UHF-absorbing C*-algebras, $\mathcal Z$-absorbing C*-algebras, and Villadsen's algebras of the first type, are known to be singly generated (\cite{Topping_1968}, \cite{Olsen_1976}, \cite{Li-Sh-AD}, \cite{Thiel-Winter-14}, and \cite{LNR-26}), while examples are known of simple unital AH algebras which are not singly generated (\cite{ELN}).

In this note, it is shown that the reduced group C*-algebra $\mathrm{C}_r^*(\mathbb F_2)$ is singly generated. In fact, there is a very simple generator (see \eqref{generator}).

\begin{thm}
With $g$ as given by \eqref{generator}, one has $\mathrm{C}^*_r(\mathbb F_2) = \mathrm{C}^*(g)$.
\end{thm}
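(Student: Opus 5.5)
The plan is to show the two inclusions. The inclusion $\mathrm{C}^*(g)\subseteq \mathrm{C}^*_r(\mathbb F_2)$ is immediate, since $g$ is built in \eqref{generator} from the canonical unitaries $u=\lambda(a)$, $v=\lambda(b)$ attached to the free generators $a,b$ of $\mathbb F_2$. The content is therefore to show that $u$ and $v$ both lie in $A:=\mathrm{C}^*(g)$. I expect this to go through the real and imaginary parts of $g$. Write $g=h+ik$ with $h=(g+g^*)/2$ and $k=(g-g^*)/(2i)$ self-adjoint. Then $A=\mathrm{C}^*(h,k)$, and $A$ contains every continuous function of $h$ and of $k$, as well as all $*$-polynomials in the resulting elements. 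The aim is to produce, from these functional calculi, elements of $A$ from which $u$ and $v$ can be read off algebraically.

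The key steps, in order, are as follows.

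\emph{Step 1: spectra.} Compute or estimate the spectra of $h$ and $k$ in $\mathrm{C}^*_r(\mathbb F_2)$. Each is a self-adjoint element of an abelian $\mathrm{C}^*$-subalgebra generated by a single unitary, or by a free combination of unitaries. Examples are $\sigma(u+u^*)=[-2,2]$ and, by Kesten's theorem, $\sigma(u+u^*+v+v^*)=[-2\sqrt3,2\sqrt3]$. The purpose is to locate intervals on which suitable continuous functions can be applied.

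\emph{Step 2: separate the summands.} Use continuous functional calculus, together with the specific shape of $g$ (for instance, a summand that is invertible or has large norm relative to the rest), to separate summands of $h$ and $k$. If one summand dominates in norm, one can try to recover it as a norm limit of polynomials in $h$ and $k$. Alternatively, if $h$ is a function of a single unitary with a positive invertible factor, dividing out that factor by functional calculus isolates the unitary part.

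\emph{Step 3: recover the unitaries.} Having, say, a self-adjoint $x=u+u^*+c$ (with $c$ scalar) or a positive invertible $|w|$ inside $A$, recover the unitaries themselves. One route is polar decomposition: if $y\in A$ is invertible, then $y|y|^{-1}\in A$. So if some $y\in A$ has the form $y=p\,u$ with $p$ positive and invertible, then $u\in A$. I will look for such $y$ among simple combinations, for example $g$ itself or $g$ multiplied by a function of $h$.

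The main obstacle I expect is Step 3. From self-adjoint elements such as $u+u^*$ alone one cannot recover $u$, because $\mathrm{C}^*(u+u^*)$ is invariant under $u\mapsto u^*$. The asymmetry must therefore come from the mixed term linking $h$ and $k$, i.e.\ from $g$ not being normal. My first attempt will be to find an explicit invertible element of $A$ whose polar decomposition yields $u$ (or $uv$). Its invertibility would be verified with a Neumann-series estimate, using that $\|u\|=\|v\|=1$ and that the coefficients in \eqref{generator} make one term dominate. Once one unitary is in $A$, the second should follow by subtracting its contribution from $g$ and repeating the polar-decomposition argument. This would give $u,v\in A$ and hence $\mathrm{C}^*_r(\mathbb F_2)=\mathrm{C}^*(g)$.
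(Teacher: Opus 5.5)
Your proposal is a list of generic tools rather than a proof, and as written it does not reach $u,v\in\mathrm{C}^*(g)$. It treats $g$ as if it were a linear combination of $u$, $v$ and a few words with one dominant coefficient. In fact $g=uqh$ is a product with a nonlinear twist $q=\exp(\frac{i}{10}(w_{-1}v+v^*w_{-1}^*))$, where $w_{-1}=u^{-1}vu$ and $h=3+v+v^*$ (the paper's $h$, not your $\Re g$). Steps~1 and~2 get no grip on this: $\Re g$ and $\Im g$ are not sums of functions of $u$ alone and of $v$ alone, Kesten's theorem plays no role, and there is no single-unitary contribution that can be ``subtracted'' from $g$.

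The one tool in your plan that does apply is the polar decomposition of $g$ itself. It gives $|g|=h$ and $g|g|^{-1}=uq$, so $\mathrm{C}^*(g)=\mathrm{C}^*(h,uq)$. The dominance heuristic then points the wrong way. Write $g=uh+u(q-1)h$, with $\|u(q-1)h\|\le 1<5=\|uh\|$. The dominant term $uh$ generates only $\mathrm{C}^*(u,v+v^*)$. This is a proper subalgebra, since the automorphism $u\mapsto u$, $v\mapsto v^*$ of $\mathrm{C}^*_r(\mathbb F_2)$ fixes it pointwise but moves $v$. So all the information that separates $v$ from $v^*$ lies in the small twist $q$, and no Neumann-series or norm-domination argument can extract it. You identify this symmetry obstruction yourself but give no mechanism to break it. You also give no way to separate $u$ from $uq$; note that $q$ depends on $w_{-1}$, which is not a priori in $\mathrm{C}^*(g)$.

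The paper supplies two ideas your plan lacks.

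\emph{Recovering $v$.} Conjugating $h$ by the unitary $uq\in\mathrm{C}^*(g)$ gives $\gamma(h)=\beta(qhq^*)$, with $\gamma=\beta\circ\mathrm{Ad}(q)$. The conditional expectation $\mathbb E^B_0$ onto $\mathrm{C}^*(v)$ maps $D=\mathrm{C}^*\{\gamma^n(h)\}\subseteq\mathrm{C}^*(g)$ into itself. This follows from Powers-type averaging (Theorem~\ref{CE}) with the Haar unitary $d=\exp(2i\arccos((v+v^*)/2))\in D\cap\mathrm{C}^*(v)$. The Jacobi--Anger expansion of $q$ and the freeness of $w_{-1},w_0$ then give $\mathbb E^B_0(\gamma(h))=3+i\kappa(v-v^*)$ with $\kappa=J_0(1/5)J_1(1/5)>0$. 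Hence $v\in\mathrm{C}^*(g)$.

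\emph{Removing $q$ from $uq$.} The paper uses $\gamma^{-1}(v)=q^*w_{-1}q\in D$ and the map $\Phi(b)=Q(b)\,q^*w_{-1}q\,Q(b)^*$, where $Q(b)=\exp(\frac{i}{10}(bv+v^*b^*))$. By Lemma~\ref{C:Duhamel}, $\Phi$ is a $\frac25$-contraction with fixed point $w_{-1}$. Hence $w_{-1}=\lim_n\Phi^n(0)\in\mathrm{C}^*(v,q^*w_{-1}q)\subseteq\mathrm{C}^*(g)$. It follows that $q\in\mathrm{C}^*(g)$ and $u=gh^{-1}q^*$. The small coefficient $\frac1{10}$ is used in this contraction estimate, not to make one term of $g$ dominate.
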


With respect to the specified generator $g$, the topological free dimension of  $\mathrm{C}^*_r(\mathbb F_2)$ is $2$.

\begin{thm}
With respect to $g$ given by \eqref{generator}, one has
$$J + \frac{3}{2} + \log(2\pi) + \log{\frac{3}{5}} \leq \chi_{\mathrm{top}}(g) \leq J + \frac{3}{2} + \log(2\pi),$$
where
$$J = \frac{1}{2\pi} \int^{2\pi}_0 \log(\frac{3 + \cos\theta + \sqrt{(3+ \cos\theta)^2 - 1}}{2}) d\theta,$$
and 
$$\delta_{\mathrm{top}}(g) = 2,$$
where $\chi_{\mathrm{top}}$ and $\delta_{\mathrm{top}}$ are topological free entropy and topological free entropy dimension, respectively.
\end{thm}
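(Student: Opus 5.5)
The plan is to pass from the single non-self-adjoint generator $g$ to its real and imaginary parts $x=\frac{g+g^*}{2}$ and $y=\frac{g-g^*}{2i}$, and to use the convention $\chi_{\mathrm{top}}(g)=\chi_{\mathrm{top}}(x,y)$. I will work with Voiculescu's norm-microstate spaces $\Gamma_{\mathrm{top}}(x,y;k,\epsilon,P)$. These consist of pairs of self-adjoint $k\times k$ matrices $(A,B)$ such that $\|p(A,B)\|$ approximates $\|p(x,y)\|$ for the finitely many noncommutative polynomials $p$ in $P$. The key structural input is the explicit form of $g$ in \eqref{generator} and the theorem above that $\mathrm{C}^*(g)=\mathrm{C}^*_r(\mathbb F_2)$. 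This identifies $x$ and $y$ concretely in terms of the canonical unitaries $\lambda(a),\lambda(b)$, so their spectra and norms can be computed by Fourier analysis in the abelian subalgebras they live in. I expect $J$ to arise this way: the integrand $\log t_+(\theta)$, where $t_+(\theta)$ is the larger root of $t+t^{-1}=3+\cos\theta$, looks like the Fuglede--Kadison type log-determinant, or log-capacity, of a spectrum parametrized by $\theta\in[0,2\pi)$.

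\emph{Upper bound.} I will use subadditivity, $\chi_{\mathrm{top}}(x,y)\le\chi_{\mathrm{top}}(x)+\chi_{\mathrm{top}}(y)$, or, if $x$ and $y$ decompose more naturally, a conditioning argument. For a single self-adjoint element $z$, every norm microstate has spectrum in a small neighbourhood of $\sigma(z)$. Estimating the volume of $k\times k$ self-adjoint matrices with spectrum in a compact set $K$ by the Weyl integration formula gives
$$\chi_{\mathrm{top}}(z)\le\sup_{\mu\in\mathcal P(K)}\iint\log|s-t|\,d\mu(s)\,d\mu(t)+\tfrac34+\tfrac12\log(2\pi).$$
This is the logarithm of the logarithmic capacity of $K$ plus the universal constant. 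The two copies of $\tfrac34+\tfrac12\log(2\pi)$ give the term $\tfrac32+\log(2\pi)$. The remaining step is to compute the capacity terms and check that they sum to $J$. I plan to do this via the Green's function, or equilibrium measure, of the relevant spectral set, using the explicit description of the spectrum coming from \eqref{generator}.

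\emph{Lower bound, and the main obstacle.} This is the hard part, because norm microstates must approximate all polynomial norms simultaneously, not just the spectra. I would first construct good microstates for the pieces. For the self-adjoint parts built from one Haar unitary, take diagonal matrices with eigenvalues distributed according to the equilibrium measure and conjugate by unitaries. For the free part, use the fact that $\mathrm{C}^*_r(\mathbb F_2)$ has matricial norm microstates: by Haagerup--Thorbj{\o}rnsen strong convergence, independent Haar or GUE random matrices have $\|p(U_k,V_k)\|\to\|p(\lambda(a),\lambda(b))\|$ almost surely. Combining these, a positive-proportion set of random pairs $(A,B)$ lies in $\Gamma_{\mathrm{top}}$. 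Its volume can then be bounded below by the equilibrium-measure log-energy, up to a loss when the spectra are fattened to a slightly larger set in order to keep the norm conditions. I expect this loss to produce the constant $\log\frac35$, from a dilation or shrinking factor in the norm estimate. Making this quantitative, so that the volume loss is controlled uniformly in $k$, is the step I expect to be most delicate.

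\emph{Dimension.} Once $\chi_{\mathrm{top}}(g)>-\infty$ has been established, the formula
$$\delta_{\mathrm{top}}(x,y)=2+\limsup_{\omega\to0}\frac{\chi_{\mathrm{top}}(x+\omega s_1,\,y+\omega s_2:s_1,s_2)}{|\log\omega|}$$
together with the general bound $\delta_{\mathrm{top}}\le 2$ for a pair of self-adjoints gives $\delta_{\mathrm{top}}(g)=2$. Alternatively, one can invoke the standard fact that finite topological free entropy of an $n$-tuple forces topological free entropy dimension $n$, using the lower bound just established.
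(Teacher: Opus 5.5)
There is a genuine gap. Your plan never uses the polar decomposition $g=(uq)\,h$ with $|g|=h=3+v+v^*$, and that decomposition is where both $J$ and $\log\frac35$ come from. \emph{Upper bound.} Subadditivity plus the one-variable capacity bound is a valid inequality, but it cannot produce $J$. Since $\mathrm{C}^*_r(\mathbb F_2)$ has no nontrivial projections, $\sigma(\Re g)$ is an interval. The automorphisms $u\mapsto e^{it}u$, $v\mapsto v$ fix $q$ and $h$ and send $g$ to $e^{it}g$. Hence $\sigma(\Re g)=\sigma(\Im g)=[-b,b]$ with $b=\|\Re g\|$ the numerical radius of $g$, and your bound reads $2\log(b/2)+\frac32+\log(2\pi)$. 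The only part of $g$ living in $\mathrm{C}^*(v)\cong C(\mathbb T)$, where Fourier analysis applies, is $|g|$. So $b$ is not computable that way, and nothing forces $2\log(b/2)$ to equal, or even be at most, $J$. In fact $J$ is not the log-capacity of any spectrum. It is the logarithmic energy of the actual law of $h^2/2$, which is not the equilibrium measure of $\sigma(h^2/2)=[\frac12,\frac{25}{2}]$; the capacity of that interval is $3$, giving $\log 3>J$. So some passage from norm microstates to tracial ones is indispensable. The paper proceeds in three steps:
\begin{itemize}
\item it uses $\chi_{\mathrm{top}}(g)\le\chi(g)$, which follows from uniqueness of the trace;
\item it applies the Nica--Shlyakhtenko--Speicher maximality theorem, $\chi(g)\le\chi(rh)=\chi^{\mathrm{sa}}(h^2/2)+c_{\mathrm U}$ with $r$ a Haar unitary free from $h$;
\item with $H(\theta)=3+2\cos\theta$, it splits the normalized energy of $\frac12H^2$ into $\iint\log|H(\theta)-H(\varphi)|=0$ (the log-potential of Haar measure vanishes on the circle) and $\iint\log\frac{H(\theta)+H(\varphi)}{2}=\iint\log(3+\cos\theta+\cos\varphi)=J$.
\end{itemize}

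\emph{Lower bound.} Building microstates for $\Re g$ and $\Im g$ separately from equilibrium measures does not give joint norm microstates for the pair. Nor does any ``fattening of spectra'' produce the specific constant $\log\frac35$. The paper plugs independent Haar unitaries into the formula, $\Phi_k(U,V)=UQ(U,V)H(V)$. By strong asymptotic freeness (Collins--Male), this lands in the prescribed norm-microstate neighbourhood with probability tending to $1$. The paper then bounds the volume of the image from below via polar coordinates $(W,P)\mapsto W(2P)^{1/2}$, collecting three factors:
\begin{itemize}
\item $(\frac35)^{k^2}$ is the Jacobian bound for the twist $U\mapsto UQ(U,V)$. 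Its differential has all singular values at least $1-\frac25$, because $U\mapsto Q(U,V)$ is $\frac25$-Lipschitz in $\|\cdot\|_2$ (Duhamel).
\item $e^{k^2 J}$ comes from the Jacobian $\prod_{i,j}\frac{h_i+h_j}{2}|e^{i\theta_i}-e^{-i\theta_j}|$ of $V\mapsto\frac12H(V)^2$, where $h_i=3+2\cos\theta_i$ for the eigenvalues $e^{i\theta_i}$ of $V$. The singular part of this Jacobian is controlled by averaging over the phase $V\mapsto e^{it}V$.
\item The multiplicity of $\Phi_k$ is at most $2^k$.
\end{itemize}
Without such a map and Jacobian estimate your lower bound has no quantitative content. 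Consequently even the finiteness of $\chi_{\mathrm{top}}(g)$ is left unproved, and your dimension argument, which otherwise matches the paper's, depends on it.
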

\subsubsection*{Acknowledgements} The work was carried out with ChatGPT. The paper was reorganized, and the arguments were checked and reformulated, by the authors.

\section{The free shift model of $\mathrm{C}^*_r(\mathbb F_2)$ and conditional expectations}

Let us recall some basic facts concerning $\mathrm{C}^*_r(\mathbb F_2)$ and (reduced) amalgamated products.

\subsection{Free shift model}

Denote by $$\lambda: \mathbb F_2 \to \mathbb{B}(l^2(\mathbb F_2))$$ the left regular representation of $\mathbb F_2$. Consider reduced group C*-algebra $\mathrm{C}^*_r(\mathbb F_2)$, the C*-algebra generated by $\lambda(\mathbb F_2)$.

Write $\mathbb F_2 = \left< u, v\right>$. In the paper, we shall view $\mathbb F_2$ as the semidirect product
$$\mathbb F_2 = \mathbb F_\infty \rtimes \mathbb Z,$$
where $$\mathbb F_\infty = \left< w_n:= u^nvu^{-n},\ n \in \mathbb Z\right>$$  (see Example 5.4 in \cite{Smith}).
Then, we have the canonical decomposition
$$
\mathrm{C}^*_r(\mathbb F_2) \cong B \rtimes_\beta \mathbb Z,
$$
where
\begin{equation}\label{defn-B}
B := \mathrm{C}^*\{w_n; n \in \mathbb Z\} \subseteq \mathrm{C}^*_r(\mathbb F_2) \quad  \mathrm{and} \quad \beta(w_n) = w_{n+1}.
\end{equation}

Write
$$ (B, \tau) = \mathrm{C}^*_r(\mathbb F_\infty) = \ast_{n \in \mathbb Z}(\mathrm{C^*}(w_n), \tau),$$
where $\tau$ is the unique normalized trace (\cite{Powers})
and denote by
$$\mathbb{E}^{B}_n: B = \mathrm{C}^*_r(\mathbb F_\infty) \to \mathrm{C^*}(w_n)$$
the canonical conditional expectation. Note that
\begin{equation}\label{E0E1}
\mathbb{E}^{B}_{n}\circ \beta=\beta\circ \mathbb{E}^{B}_{n-1}, \quad n\in \mathbb{Z}.
\end{equation}

\subsection{Powers's property}
The following property of the conditional expectation (inspired by \cite{Powers}) seems to be well known to experts. Since the authors cannot find a reference, a proof is provided in the appendix.
\begin{thm}\label{CE}
Let
$$
 (A,\tau)=(M,\tau_M)*(N,\tau_N)
$$
be a reduced free product of unital tracial C*-algebras with faithful GNS
representations.  Assume that $M$ is abelian, and let $d\in M$ be
a Haar unitary (i.e., $\tau_M(d^k)=0$, $k\in\mathbb Z\setminus\{0\}$).
Then,
$$
 \mathbb{E}_M(a)=\lim_{L\to\infty}\frac1L\sum_{k=1}^L d^kad^{-k}, \quad a \in A,
$$
where $\mathbb E_M: A \to M$ is the canonical conditional expectation.
\end{thm}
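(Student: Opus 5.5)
The plan is to prove the formula on a dense subspace, check it trivially on $M$, and then pass to all of $A$ by continuity. The maps
$$\Phi_L(a)=\frac1L\sum_{k=1}^L d^kad^{-k}$$
are unital completely positive contractions, and $\mathbb E_M$ is contractive. So it suffices to verify $\Phi_L(a)\to\mathbb E_M(a)$ in norm on a dense subspace of $A$; an $\varepsilon/3$ argument then extends the limit to all of $A$. Since $M$ is abelian, for $m\in M$ we have $\Phi_L(m)=m=\mathbb E_M(m)$. By linearity, and since $\mathbb E_M$ kills reduced words, it remains to prove
$$\Phi_L(x)\to 0 \quad\text{for every reduced word } x=a_1a_2\cdots a_n,$$
with letters alternating from $M^\circ=\ker\tau_M$ and $N^\circ=\ker\tau_N$, and at least one letter from $N^\circ$.

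First I reduce to the case where $x$ begins and ends with letters from $N^\circ$. If $x=m_1 y m_2$ with $m_1,m_2\in M$ and $y$ beginning and ending in $N^\circ$, then, using commutativity of $M$, $\Phi_L(x)=m_1\Phi_L(y)m_2$. So assume $x$ begins and ends with letters from $N^\circ$. Then each $d^kxd^{-k}$ (for $k\neq 0$) is again a reduced word, whose first and last letters are $d^k\in M^\circ$ and $d^{-k}\in M^\circ$; here I use that $d$ is Haar, so $\tau(d^{\pm k})=0$.

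The key step is an almost-orthogonality estimate of Powers type, in the style of Haagerup's inequality. Set $y_k=d^kxd^{-k}$ and $S=\sum_{k=1}^L y_k$. I want to show $\|S\|=o(L)$; I expect something like $\|S\|\le C\sqrt L$. For $k\ne l$,
$$y_k^*y_l=d^kx^*d^{l-k}xd^{-l}.$$
Since $d^{l-k}\in M^\circ$ sits between two $N^\circ$ letters, this product is again reduced and has trace zero. More strongly, the ranges of the $y_k$ in $L^2(A)$ are almost orthogonal. I would make this precise through the standard free-product Hilbert space decomposition
$$L^2(A)=\mathbb C\Omega\oplus\bigoplus\bigl(\mathring H_{i_1}\otimes\cdots\otimes\mathring H_{i_m}\bigr).$$
Write $\xi\in L^2(A)$ according to whether its first tensor leg lies in $\mathring H_M$ or not. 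Then one obtains the Powers-type bound
$$\Bigl\|\sum_k y_k\Bigr\|\le C\|x\|^{\,\cdot}\sqrt L,$$
where the constant depends only on $x$. The idea is as follows. Let $P$ be the projection onto vectors whose first leg is not in $\mathring H_M$ and whose first $M$-component is of a certain type. Each $y_k$ acting on a vector either maps it into vectors starting with the letter $d^k$ (the leading $d^k$ survives), or cancellation occurs only when the vector begins with a component in $d^{k}\mathring H_M$-direction. The spaces $d^k\mathring H_M$-components indexed by $k$ are orthogonal, because $\tau(d^{k-l})=0$. Splitting $y_k=y_k'+y_k''$ accordingly, $\sum_k y_k'$ has mutually orthogonal ranges, which gives norm $\le\sqrt L\,\|x\|$. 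Dually, $\sum_k y_k''$ has mutually orthogonal supports, which gives norm $\le\sqrt L\,\|x\|$.

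I expect the main obstacle to be this decomposition. The letter $d^{-k}$ can partially absorb the first leg of the vector when that leg lies in $\mathring H_M$, and this has to be controlled with uniform norm bounds. Working with the abelian algebra $M$ acting on $L^2(M)\ominus\mathbb C$, I would use the subspaces $\overline{d^{k}\,\mathrm{span}\{d^j: j\ge 0\}}$, or more simply the spectral decomposition of multiplication by $d$, to separate these contributions. If $M$ is larger than $C^*(d)$, the argument must be adapted: I would decompose $L^2(M)$ into $d$-cyclic pieces, on each of which $d$ acts as a bilateral shift, since $d$ is Haar. Once the $\sqrt L$ bound is in hand, dividing by $L$ gives $\Phi_L(x)\to0$, and the theorem follows.
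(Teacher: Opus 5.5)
Your reductions are the same as the paper's. Contractivity of the averages reduces the claim to a dense set, and commutativity of $M$ lets you strip the outer $M$-letters, so only reduced words $x=a_1\cdots a_n$ beginning and ending in $N^\circ$ remain; that is exactly the hypothesis the norm estimate needs. Everything then rests on $\bigl\|\sum_{k=1}^L d^kxd^{-k}\bigr\|\le 2\sqrt L\,\|x\|$. The paper obtains this from Choi's Lemma~2.2 by splitting each term into a piece with mutually orthogonal ranges and a piece with mutually orthogonal initial spaces, which is the shape you describe. However, you do not establish the estimate:
\begin{itemize}
\item The orthogonality you invoke is misstated: the spaces $d^k\mathring H_M=L^2(M)\ominus\mathbb C\widehat{d^k}$ are hyperplanes and are not mutually orthogonal.
\item Your projection $P$ is never actually defined.
\item The fallback tools you list (spectral or $d$-cyclic decompositions of $L^2(M)$, the nested spaces $\overline{d^k\,\mathrm{span}\{d^j:j\ge0\}}$) are not what is needed.
\end{itemize}
As written, the only nontrivial step of the proof is a gap.

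Here is the missing precise form. Let $P$ be the projection of $\mathcal F$ onto the closed span of the tensors whose first leg lies in $H_M'$, and put $Q=1-P$. Because $x$ begins and ends in $N^\circ$, for such a tensor $\xi$ one has $x\xi=\hat a_1\otimes\cdots\otimes\hat a_n\otimes\xi$. Hence $PxP=0$, i.e.\ $xP=QxP$. For $j\neq0$, $d^j$ maps $Q\mathcal F$ into $\widehat{d^j}\otimes Q\mathcal F\subseteq P\mathcal F$, so $Qd^jQ=0$. Therefore the subspaces $d^kQ\mathcal F$, $1\le k\le L$, are mutually orthogonal; this is where $\tau_M(d^{k-l})=0$ enters. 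Now write
\[
 d^kxd^{-k}=d^kQ\,(xPd^{-k})+(d^kx)\,(Qd^{-k}).
\]
The first summands have ranges in the orthogonal spaces $d^kQ\mathcal F$. The operator $Qd^{-k}$ vanishes on $(d^kQ\mathcal F)^\perp$, so the second summands are supported on those same orthogonal spaces. Pythagoras for the first sum and Cauchy--Schwarz for the second then bound each sum by $\sqrt L\,\|x\|$.

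Your worry about partial absorption disappears. For $\zeta\in H_M'$, the scalar part of $d^{-k}\zeta$ is $\langle\zeta,\widehat{d^k}\rangle\hat 1$, so the first leg is absorbed exactly along the line $\mathbb C\widehat{d^k}$, and these lines are orthogonal for distinct $k$. No property of $M$ beyond $\tau_M(d^j)=0$ for $j\neq0$ is used in this step.
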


Applying Theorem \ref{CE} to the free product decomposition $$(B, \tau) = (\mathrm{C}^*(w_0), \tau) \ast(\ast_{n\in\mathbb Z\setminus\{ 0\}} (\mathrm{C}^*(w_n), \tau)),$$  one has the following corollary.

\begin{cor}\label{average-in}
Let $D \subseteq B $ be a sub-C*-algebra, and suppose that there exists a Haar unitary $d \in D \cap \mathrm{C}^*(w_0)$. Then
$$\mathbb E^{B}_0(D) \subseteq D.$$
\end{cor}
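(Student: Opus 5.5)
The plan is to apply Theorem \ref{CE} to the decomposition
$$(B,\tau) = (M,\tau)\ast(N,\tau), \qquad M=\mathrm{C}^*(w_0),\quad N=\ast_{n\neq 0}\mathrm{C}^*(w_n),$$
and then use that $D$ is a norm-closed subalgebra. Under the identification of $B$ with $\mathrm{C}^*_r(\mathbb F_\infty)$, the canonical conditional expectation $\mathbb E_M$ of Theorem \ref{CE} is exactly $\mathbb E^B_0$. We check the hypotheses of Theorem \ref{CE} first.

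\begin{itemize}
\item $M=\mathrm{C}^*(w_0)\cong C(\mathbb T)$ is abelian, since $w_0$ is a unitary generating an infinite cyclic subgroup with Haar spectral measure.
\item Both factors are unital tracial C*-algebras with faithful GNS representations. Each is a reduced group C*-algebra of a subgroup of $\mathbb F_\infty$ with the restricted canonical trace, which is faithful.
\item The reduced free product of these factors is $B$. This is the standard identification $\mathrm{C}^*_r(G\ast H)=\mathrm{C}^*_r(G)\ast_{\mathrm{red}}\mathrm{C}^*_r(H)$, applied with $G=\langle w_0\rangle$ and $H=\langle w_n : n\neq 0\rangle$.
\item The given element $d\in D\cap \mathrm{C}^*(w_0)$ is a Haar unitary in $M$ by assumption.
\end{itemize}

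Now let $a\in D$. By Theorem \ref{CE},
$$\mathbb E^B_0(a)=\lim_{L\to\infty}\frac1L\sum_{k=1}^L d^k a d^{-k}.$$
Each term $d^kad^{-k}$ is a product of elements of $D$; here $d^{-k}=(d^*)^k\in D$ because $D$ is a $*$-subalgebra and $d$ is unitary. So each Cesàro average lies in $D$. The limit in Theorem \ref{CE} is a norm limit, and $D$ is norm closed, so $\mathbb E^B_0(a)\in D$.

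There is no real obstacle. The only point to watch is that the limit in Theorem \ref{CE} is taken in norm rather than in a weaker topology, since closedness of $D$ is only norm closedness. The statement of Theorem \ref{CE} as an identity in the C*-algebra $A$, rather than in its von Neumann closure, confirms this.

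Two further points deserve a sentence each. First, $D$ need not contain the unit of $B$, but this is harmless because we only multiply $a$ by the elements $d^k$ and $d^{-k}$, which lie in $D$. Second, the corollary uses the expectation onto $\mathrm{C}^*(w_0)$ and not onto a general $\mathrm{C}^*(w_n)$; the latter would follow by conjugating with $\beta$ via \eqref{E0E1}.
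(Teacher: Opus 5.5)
Your proposal is correct and is essentially the paper's argument. You apply Theorem~\ref{CE} to the decomposition $B=\mathrm{C}^*(w_0)\ast\bigl(\ast_{n\neq0}\mathrm{C}^*(w_n)\bigr)$, note that each Ces\`aro average $\frac1L\sum_{k=1}^L d^kad^{-k}$ lies in $D$ (no unit is needed since $k\ge1$, and $d^{-k}=(d^*)^k\in D$), and conclude because the limit is in norm and $D$ is norm closed.
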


\section{$\mathrm{C}^*_r(\mathbb F_2)$ is singly generated}

Put
\begin{equation}\label{per-ger}
 q=\exp\!\big(\frac{i}{10}(w_{-1}w_0+w_0^*w_{-1}^*)\big),
 \quad
 \mathrm{and}
 \quad
 h=3+w_0+w_0^*,
\end{equation}
and consider the element
\begin{equation}\label{generator}
 g=uqh,
\end{equation}
or, explicitly as a function of $u$ and $v$,
$$
g = u \exp(\frac{i}{10}(u^{-1}vuv + v^{-1}u^{-1}v^{-1}u))(3+v+v^{-1}).
$$
Then $g$ generates $\mathrm{C}^*_r(\mathbb F_2)$:
\begin{thm}\label{main-thm}
With $g$ as above, one has $$\mathrm{C}^*_r(\mathbb F_2) = \mathrm{C}^*(g). $$
\end{thm}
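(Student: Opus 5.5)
The plan is to peel off the pieces of $g$ one at a time and show that each lies in $A:=\mathrm{C}^*(g)$. Since $q$ and $u$ are unitaries, $g^*g = hq^*u^*uqh = h^2$. Because $h = 3+w_0+w_0^* \geq 1$ is positive and invertible, $h = (g^*g)^{1/2}\in A$. Hence $U := uq = gh^{-1}\in A$ is a unitary in $A$, and $w_0+w_0^* = h-3\in A$.

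The first goal is to upgrade from $w_0+w_0^*$ to $w_0$ itself. Conjugation by $U$ preserves $A$, and $U^*xU = q^*\beta^{-1}(x)q$. So $y:=U^*hU = q^*(3+w_{-1}+w_{-1}^*)q$ lies in $A\cap B$. Let $D := \mathrm{C}^*(h,y)\subseteq A\cap B$. The spectral measure of $w_0+w_0^*$ is the (atomless) arcsine law; let $F$ be its distribution function. Then $d := \exp(2\pi i F(w_0+w_0^*))$ is a Haar unitary in $D\cap\mathrm{C}^*(w_0)$. Corollary \ref{average-in} then gives $\mathbb E^B_0(y)\in D\subseteq A$.

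Next I would compute $\mathbb E^B_0(y)$ perturbatively in the parameter $1/10$. Put $Z = w_{-1}w_0$ and $X = Z+Z^*$. Expanding $q^*w_{-1}q = w_{-1} - \frac{i}{10}[X,w_{-1}]+\cdots$ and using freeness (alternating centered words have $\mathbb E^B_0 = 0$), the first-order terms give
$$\mathbb E^B_0(y) = 3 + \tfrac{i}{10}(w_0 - w_0^*) + R,\qquad \|R\| = O(10^{-2}).$$
So in the picture $\mathrm{C}^*(w_0)\cong \mathrm{C}(\mathbb T)$ it is a function $\varphi(\theta) = 3 - \tfrac15\sin\theta + r(\theta)$. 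Together with $\cos\theta$ (coming from $h$), this separates points of $\mathbb T$ as soon as $\varphi(\theta)\neq\varphi(-\theta)$ whenever $\sin\theta\neq0$. By Stone–Weierstrass this would give $\mathrm{C}^*(w_0)\subseteq A$. I expect this to be the \textbf{main obstacle}: one must control the odd part of $r$ relative to $\tfrac15\sin\theta$, uniformly and near $\theta=0,\pi$. I would first try an exact computation. Since $Z$ is a Haar unitary free from $w_0$, $q$ is a function of $Z$, and $\mathbb E^B_0(q^*w_{-1}q) = \mathbb E^B_0(q^*Z q)w_0^*$. This can be expressed through Fourier coefficients of $e^{\frac{i}{5}\cos}$, which should give $\varphi$ in closed form as $c_1 w_0 + \overline{c_1}w_0^* + \text{const}$, with $c_1$ purely imaginary and nonzero.

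Finally, with $w_0\in A$, I would show $q\in A$; then $u = Uq^*\in A$ and $v=w_0\in A$, which finishes the proof. We have $V := U^*w_0U = q^*w_{-1}q\in A$. Because $q$ is a function of the unitary $Z$, it commutes with $Z$, and $q^*Zq = Z$. Averaging against the Haar unitary $w_0\in A$ as in Theorem \ref{CE}, and applying the same Powers-type averaging to words in $V, w_0$, I would try to extract $\mathbb E^B_0$-type compressions that recover a function of $Z$ generating $\mathrm{C}^*(Z)$, and hence $q$. Failing an exact argument, a fallback is perturbative: $\|V - w_{-1}\|\leq 2\|q-1\|\leq \frac{2}{5}$. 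From this one would try to show that $\mathrm{C}^*(w_0,V)$ contains $w_{-1}$, by solving $w_{-1} = qVq^*$ with $q=\exp(\frac{i}{10}(Z+Z^*))$ as a fixed point of a contraction inside $A$.
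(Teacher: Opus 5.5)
Your plan follows the paper's proof: $h=|g|$; $U=uq$ implements $\gamma=\beta\circ\mathrm{Ad}(q)$; Corollary~\ref{average-in}, with a Haar unitary built from $w_0+w_0^*$, puts $\mathbb E^B_0$ of a $U$-conjugate of $h$ into $\mathrm{C}^*(g)$; this yields $w_0$, and a contraction then yields $w_{-1}$ and $q$. Using $y=\gamma^{-1}(h)$ and $\mathrm{C}^*(h,y)$ instead of $\gamma(h)$ and the $\gamma$-invariant algebra is harmless, as is using $e^{2\pi iF(a)}$, which is just $d^*$, instead of $d$.

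The problem is that the two decisive steps are left open. The concrete route you give for the first is wrong: $\mathbb E^B_0(q^*w_{-1}q)=\mathbb E^B_0(q^*Zq)w_0^*$ fails. Since $q\in\mathrm{C}^*(Z)$, one has $q^*Zq=Z$ and $\mathbb E^B_0(Z)=\tau(w_{-1})w_0=0$, which contradicts your own first-order term. The error is that $q^*w_{-1}q=Zq^*w_0^*q$, and $w_0^*$ does not commute with $q$, so it cannot be pulled out. The $O(10^{-2})$ perturbative bound cannot give separation near $\theta=0,\pi$, as you say.

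What is needed is the exact computation. Write $q=\sum_n c_nZ^n$ with $c_n=i^nJ_n(1/5)$, so that $q^*w_{-1}q=\sum_{m,n}\overline{c_m}c_nZ^{-m}w_{-1}Z^n$. Reducing words in the free pair $w_{-1},w_0$ shows that $\mathbb E^B_0(Z^{-m}w_{-1}Z^n)=0$ unless $(m,n)=(1,0)$, where $Z^{-1}w_{-1}=w_0^*$. Hence
\[
\mathbb E^B_0(y)=3+i\kappa(w_0-w_0^*),\qquad \kappa=J_0(1/5)J_1(1/5)>0,
\]
which is your predicted closed form. Then $w_0=\frac12(h-3)+\frac{1}{2i\kappa}\bigl(\mathbb E^B_0(y)-3\bigr)$ directly, with no Stone--Weierstrass argument.

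For the last step, your primary idea has no working mechanism. Powers averaging against $w_0$ only returns $\mathbb E^B_0$ of the words, i.e.\ elements of $\mathrm{C}^*(w_0)$, which you already have. Your fallback, however, is exactly the paper's argument and should be the proof. Put $Q(b)=\exp(\frac{i}{10}(bw_0+w_0^*b^*))$ and $\Phi(b)=Q(b)VQ(b)^*$ for $b\in B$. Then:
\begin{itemize}
\item Lemma~\ref{C:Duhamel} gives $\|\Phi(b_1)-\Phi(b_2)\|\le 2\|Q(b_1)-Q(b_2)\|\le\frac25\|b_1-b_2\|$ for all $b_1,b_2\in B$.
\item $Q(w_{-1})=q$, so $\Phi(w_{-1})=w_{-1}$.
\item $\Phi$ maps $\mathrm{C}^*(w_0,V)$ into itself.
\end{itemize}
Hence $w_{-1}=\lim_n\Phi^n(0)\in\mathrm{C}^*(w_0,V)\subseteq\mathrm{C}^*(g)$, so $q\in\mathrm{C}^*(g)$ and $u=Uq^*\in\mathrm{C}^*(g)$. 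Your bound $\|V-w_{-1}\|\le\frac25$ is not what drives this; the key is the global Lipschitz constant $\frac25<1$ of $\Phi$.
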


The proof of this theorem is based on the next four lemmas.

Note first that the element $uq$ is unitary. Also, 
\begin{equation}\label{h-in}
 h = |g|=(g^*g)^{1/2}\in \mathrm{C}^*(g).
\end{equation}
Since $\|v + v^*\| \leq 2$, the element $h = 3+v+v^*$ is invertible.

Since the unitary $q$ is in $B$, composing $\mathrm{Ad}(q)$ with $\beta$ (see \eqref{defn-B}), one obtains the automorphism 
\begin{equation}\label{gamma-in-B}
 \gamma:=\beta\circ\mathrm{Ad}(q) \in \mathrm{Aut}(B).
\end{equation}
Since $h \in B$ (see \eqref{defn-B}), the C*-algebra
\begin{equation}\label{defn-D}
 D := \mathrm{C}^*\{\gamma^n(h):\ n\in\mathbb{Z}\}
\end{equation}
is a $\gamma$-invariant sub-C*-algebra  of $B$.

Noting that $\gamma$ is implemented by the unitary
$$
 uq = gh^{-1}\in \mathrm{C}^*(g),
$$
one obtains 
\begin{equation*}\label{D-in-g}
\gamma^n(h) \in \mathrm{C}^*(g),\quad n \in \mathbb Z.
\end{equation*}
Thus, 
\begin{equation}\label{D-in-g}
 D \subseteq B \cap \mathrm{C}^*(g).
 \end{equation}

\begin{lem}\label{DinD}
With $D$ as in \eqref{defn-D}, one has $$\mathbb E^{B}_0(D) \subseteq D.$$
\end{lem}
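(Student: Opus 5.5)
The plan is to apply Corollary \ref{average-in} directly to $D$. This requires a Haar unitary $d$ with $d\in D\cap \mathrm{C}^*(w_0)$. The natural candidate comes from $h=3+w_0+w_0^*$, which lies in $D$ (it is $\gamma^0(h)$) and also in $\mathrm{C}^*(w_0)$. Hence $\mathrm{C}^*(h)\subseteq D\cap \mathrm{C}^*(w_0)$, and it is enough to find a Haar unitary inside $\mathrm{C}^*(h)$, that is, a continuous function of the self-adjoint element $x:=(h-3)/2=(w_0+w_0^*)/2$.

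We cannot simply take $d=w_0$: $\mathrm{C}^*(h)$ is only the even part of $\mathrm{C}^*(w_0)\cong \mathrm{C}(\mathbb T)$, namely the functions of $\cos\theta$, so $w_0$ itself is not available. Instead we use distributions.

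\begin{enumerate}
\item Since $w_0$ is a Haar unitary for $\tau$, the distribution of $w_0$ with respect to $\tau$ is normalized Lebesgue measure on $\mathbb T$.
\item Consequently, the spectral distribution $\mu$ of $x$ with respect to $\tau$ is the push-forward of Lebesgue measure under $\theta\mapsto\cos\theta$. This is the arcsine law on $[-1,1]$, with density $\frac{1}{\pi\sqrt{1-t^2}}$.
\item Let $F(t)=\mu([-1,t])$ be its distribution function. Because $\mu$ has no atoms and full support, $F\colon[-1,1]\to[0,1]$ is continuous and increasing, with $F(-1)=0$ and $F(1)=1$. Explicitly, $F(t)=1-\frac{1}{\pi}\arccos t$.
\item By the probability integral transform, $F_*\mu$ is Lebesgue measure on $[0,1]$.
\end{enumerate}

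Now define
$$d:=\exp\big(2\pi i\,F(x)\big)\in \mathrm{C}^*(h).$$
This is a unitary given by continuous functional calculus. For $k\neq 0$,
$$\tau(d^k)=\int_{[-1,1]} e^{2\pi i k F(t)}\,d\mu(t)=\int_0^1 e^{2\pi i k s}\,ds=0,$$
so $d$ is a Haar unitary in $D\cap\mathrm{C}^*(w_0)$. Corollary \ref{average-in} then gives $\mathbb E^B_0(D)\subseteq D$.

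The only point requiring care is the observation that $w_0$ itself is not in $D$, so a Haar unitary must be manufactured from the even element $h$. Once one notices that a Haar unitary only needs the right distribution, not injectivity of the underlying function, the rest is routine.

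The verification that $\tau$ restricted to $\mathrm{C}^*(w_0)$ is integration against Haar measure is standard: $\tau(w_0^k)=\delta_{k,0}$ in the reduced group C*-algebra. The computation of $F_*\mu$ is elementary because $\mu$ is atomless.
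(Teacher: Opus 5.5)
Your argument is correct and is essentially the paper's: since $F(t)=1-\frac{1}{\pi}\arccos t$, your unitary $\exp(2\pi i F(x))=\exp(-2i\arccos(x))$ is just the adjoint of the paper's $d=\exp(2i\arccos(a/2))$, and your probability-integral-transform check replaces the paper's direct computation splitting $\int_0^{2\pi}$ at $\pi$. One small correction to your commentary: $w_0$ does lie in $D$ (this is Lemma \ref{recover-w-0}, whose proof relies on the present lemma), so the accurate point is that $w_0\notin \mathrm{C}^*(h)$ and is not yet known to be in $D$ at this stage.
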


\begin{proof}
By Corollary \eqref{average-in}, it is enough to find a Haar unitary in $ D\cap \mathrm{C}^*(w_0)$.
Put
$$
 a=w_0+w_0^*=h-3 \in D\cap \mathrm{C}^*(w_0)
$$
and consider the unitary 
$$
 d=\exp(2 i \arccos(a/2)\big)\in D\cap \mathrm{C}^*(w_0).
$$
We assert that $d$ is a Haar unitary.

Indeed, identify $\mathrm{C}^*(w_0)$ with $\mathrm{C}(\mathbb T)$ by $w_0=e^{i\theta}$ where $\theta \in [0, 2\pi]$. Then
\[
 \arccos(\cos\theta)=
 \begin{cases}
  \theta,&0\leq\theta\leq\pi,\\
  2\pi-\theta,&\pi\leq\theta\leq2\pi,
 \end{cases}
\]
and therefore
\[
 \tau(d^m)
 =\frac1{2\pi}
   \left(\int_0^\pi e^{2im\theta}\,d\theta
   +\int_\pi^{2\pi}e^{-2im\theta}\,d\theta\right)=0, \quad m \in \mathbb{Z} \setminus\{0\},
\]
as asserted.
\end{proof}

 \begin{lem}\label{recover-w-0}
With $D$ as in \eqref{defn-D}, one has
$$
w_0\in D.
$$
\end{lem}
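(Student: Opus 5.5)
The plan is to produce, inside $D\cap \mathrm{C}^*(w_0)$, a continuous function of $w_0$ that is \emph{not} even under $\theta\mapsto-\theta$ (identifying $\mathrm{C}^*(w_0)=\mathrm{C}(\mathbb T)$, $w_0=e^{i\theta}$). We already have $a=w_0+w_0^*=2\cos\theta\in D$, and $\mathrm{C}^*(a)$ consists exactly of the even functions. So it suffices to find $f\in D\cap\mathrm{C}^*(w_0)$ with $f(e^{i\theta})\neq f(e^{-i\theta})$ for every $\theta\notin\{0,\pi\}$. Then $\{a,f\}$ separates the points of $\mathbb T$: $a$ separates points with different $\cos\theta$, and $f$ separates each remaining pair $e^{\pm i\theta}$. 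Since $1=(h-a)/3\in D$, Stone--Weierstrass gives $\mathrm{C}^*(a,f)=\mathrm{C}(\mathbb T)$, hence $w_0\in D$.

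To manufacture $f$, I would use Lemma \ref{DinD}, which gives $\mathbb E^B_0(D)\subseteq D$, so that $\mathbb E^B_0$ maps elements of $D$ into $D\cap \mathrm{C}^*(w_0)$. The natural candidate is
$$f=\mathbb E^B_0(\gamma^{-1}(h)),\qquad \gamma^{-1}(h)=q^*\beta^{-1}(h)q=q^*(3+w_{-1}+w_{-1}^*)q ,$$
which lies in $D$ because $D$ is $\gamma$-invariant. Write $q=e^{it(X+X^*)}$ with $X=w_{-1}w_0$ and $t=1/10$, and put $y=w_{-1}+w_{-1}^*$. Expanding $q^*yq=\sum_{m\ge0}\frac{(-it)^m}{m!}\mathrm{ad}_{X+X^*}^m(y)$, the $m=0$ term has $\mathbb E^B_0(y)=0$, since $w_{-1}$ is free from $w_0$. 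For the $m=1$ term, freeness gives
$$\mathbb E^B_0([X,w_{-1}^*])=-w_0,\qquad \mathbb E^B_0([X^*,w_{-1}])=w_0^*,$$
while the other two brackets have zero expectation. Hence the first-order part of $f$ is $it(w_0-w_0^*)=-2t\sin\theta$, which is odd and vanishes only at $\theta\in\{0,\pi\}$.

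The main obstacle is controlling the higher-order terms $m\ge2$, whose odd parts might cancel the $-2t\sin\theta$ at some $\theta$. I plan to handle this as follows.
\begin{itemize}
\item Each $\mathrm{ad}^m_{X+X^*}(y)$ is a sum of at most $2^m\cdot 2$ words in the $w$'s. Each word contains at most $m$ letters $w_0^{\pm1}$, so its conditional expectation is a Laurent polynomial in $w_0$ of degree at most $m$, with coefficient $\ell^1$-norm at most $1$ per word.
\item The odd part of $f$ is then $\sum_k c_k\sin k\theta=\sin\theta\cdot\varphi(\theta)$, using $|\sin k\theta/\sin\theta|\le k$.
\item Summing these bounds gives
$$\Bigl|\varphi+2t\Bigr|\le\sum_{m\ge2}\frac{t^m}{m!}\,2^{m+1}m .$$
For $t=1/10$ this is well below $2t=1/5$, so $\varphi$ never vanishes.
\end{itemize}
If this crude bound turned out to be insufficient, the fallback is to sharpen the counting: many words are killed by freeness, since a word has nonzero $\mathbb E^B_0$ only if the $w_{-1}$ letters cancel. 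With $\varphi\neq0$ everywhere, $f$ has the required separation property, and the Stone--Weierstrass argument of the first paragraph finishes the proof.
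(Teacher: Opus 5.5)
\textbf{The gap: the word count.} Your overall strategy (produce an element of $D\cap\mathrm{C}^*(w_0)$ that breaks the symmetry $\theta\mapsto-\theta$, then use Stone--Weierstrass) is sound. The first-order computation is also correct. The step that fails is the claim that $\mathrm{ad}^m_{S}(y)$, with $S:=X+X^*$, is a sum of at most $2^{m+1}$ words.

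Since $\mathrm{ad}_S(w)=Xw+X^*w-wX-wX^*$, each word produces four words, so you get up to $2\cdot 4^m$ words. There is no cancellation to save you. Already for $m=1$ the eight group elements $w_{-1}w_0w_{-1}$, $w_{-1}w_0w_{-1}^{-1}$, $w_0^{-1}$, $w_0^{-1}w_{-1}^{-2}$, $w_{-1}^2w_0$, $w_0$, $w_{-1}w_0^{-1}w_{-1}^{-1}$, $w_{-1}^{-1}w_0^{-1}w_{-1}^{-1}$ are pairwise distinct. Equivalently, expanding $e^{-itS}ye^{itS}=\sum_{a,b}\frac{(-it)^a(it)^b}{a!b!}S^ayS^b$ gives total weight $2(4t)^m/m!$ in degree $m$.

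With the correct count your estimate becomes
\[
|\varphi+2t|\le\sum_{m\ge2}\frac{2m(4t)^m}{m!}=8t\,(e^{4t}-1)\approx 0.39 ,
\]
which exceeds $2t=0.2$. So as written, the argument does not show that $\varphi$ never vanishes. The ``fallback'' you mention is what is needed, and it has to be carried out.

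A cheap repair works. Every word in $\mathrm{ad}^m_S(y)$ contains exactly $m+1$ letters $w_{-1}^{\pm1}$, so its $w_{-1}$-exponent sum has the parity of $m+1$. But $\mathbb{E}^B_0$ of a group element is nonzero only if the element lies in $\langle w_0\rangle$, which forces that exponent sum to be $0$. Hence every even $m$ contributes nothing, and the bound drops to $\sum_{m\ge3,\ m\ \mathrm{odd}}2m(4t)^m/m!\approx 0.065<0.2$.

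Separately, justify $1\in D$ by $1\in\mathrm{C}^*(h)$, which holds because $\mathrm{sp}(h)\subseteq[1,5]$. Writing $1=(h-a)/3$ is circular, since it presupposes $a=h-3\in D$.

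\textbf{Comparison with the paper.} The paper avoids estimates altogether. It expands $q=\sum_n c_nz^n$ with $z=w_{-1}w_0$ and $c_n=i^nJ_n(1/5)$ (Jacobi--Anger). It then checks which group elements $z^nw_0^{\pm1}z^{-m}$ lie in $\langle w_{-1}\rangle$. Only two do, giving exactly $\mathbb{E}^B_0(\gamma(h))=3+i\kappa(w_0-w_0^*)$ with $\kappa=J_0(1/5)J_1(1/5)>0$.

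The same computation applied to your $\gamma^{-1}(h)$ gives $f=3+i\kappa(w_0-w_0^*)$ on the nose. Here only $z^{-1}w_{-1}=w_0^{-1}$ and $w_{-1}^{-1}z=w_0$ survive. So the higher-order terms you feared merely renormalize $t$ to $\kappa$ and create no higher harmonics. Then $w_0-w_0^*\in D$ directly, and $w_0=\frac12\bigl(a+(w_0-w_0^*)\bigr)$, with no need for Stone--Weierstrass.

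Once repaired, your perturbative route is more robust: it needs only a nondegenerate first-order term, so it would survive replacing $\exp$ by another small perturbation with no closed-form expansion. In this setting, though, the exact computation is both shorter and sharper.
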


\begin{proof}
Writing $$ z: = w_{-1}w_0, $$ one has
\begin{equation}\label{JA-expansion}
q=\exp(\frac{i}{10}(z+z^*))=\sum_{n\in \mathbb{Z}}c_nz^n,
\end{equation}
where
$$c_n=i^nJ_n({1}/{5}), $$
$$
J_n(x)=\sum_{k=0}^\infty\frac{(-1)^k}{k!(n+k)!}(\frac{x}{2})^{n+2k}, \quad n\geq0,
$$
and
$$
J_{n}(x)=(-1)^{-n}J_{-n}(x), \quad n<0.
$$
(see, for instance, the Jacobi-Anger expansion on pages 22--23 of \cite{Watson}).

The series \eqref{JA-expansion} converges absolutely in norm. It follows that
\begin{eqnarray*}
q(w_0+w_0^*)q^* & = & \sum_{n,m\in \mathbb{Z}}c_n\overline{c_m}(z^nw_0z^{-m}+z^nw_0^*z^{-m}) \\
& = & \sum_{n,m\in \mathbb{Z}}c_n\overline{c_m}((w_{-1}w_0)^n w_0(w_{-1}w_0)^{-m}+(w_{-1}w_0)^nw_0^*(w_{-1}w_0)^{-m}).
\end{eqnarray*}
Since $w_{-1}$ and $w_0$ are freely independent, we have
$$
\mathbb{E}^B_{-1}((w_{-1}w_0)^nw_0(w_{-1}w_0)^{-m})=\begin{cases}
w_{-1}^*, & n=0,\  m=1 \\
0, & \mathrm{otherwise}
\end{cases},
$$
and
$$
\mathbb{E}^B_{-1}((w_{-1}w_0)^nw_0^*(w_{-1}w_0)^{-m})=\begin{cases}
w_{-1}, & n=1,\ m=0 \\
0, & \mathrm{otherwise}
\end{cases}.
$$
It then follows that
\begin{eqnarray*}
\mathbb{E}^B_{-1}(q(w_0+w_0^*)q^*)&= &c_1\overline{c_0}w_{-1}+c_0\overline{c_1}w_{-1}^*\\
&= &iJ_1({1}/{5})\overline{J_0({1}/{5})}w_{-1}+J_0({1}/{5})\overline{iJ_1({1}/{5})}w_{-1}^*\\
&=& i\kappa(w_{-1}-w_{-1}^*),
\end{eqnarray*}
where
$$\kappa=J_0({1}/{5})J_1({1}/{5})>0.$$
Together with  \eqref{E0E1}, one has 
\begin{eqnarray*}
\mathbb{E}^B_0(\gamma(h))&= & \mathbb{E}^B_0(\beta(qhq^*))\\
&= &\beta(\mathbb{E}^B_{-1}(q(3+w_0+w_0^*)q^*))\\
&=& 3+i\kappa\beta(w_{-1}-w_{-1}^*)\\
&=& 3+i\kappa(w_{0}-w_{0}^*).
\end{eqnarray*}
Note that $\gamma(h)\in D$ (see \eqref{defn-D}). It follows from  Lemma \ref{DinD} that
$$
w_0-w_0^* = (\mathbb{E}_0^{B}(\gamma(h)) - 3)/i\kappa\in D.
$$
Since $w_0+w_0^*=h-3\in D$, one has
$$
w_0=\frac{1}{2}(w_0+w_0^*)+\frac{1}{2}(w_0-w_0^*)\in D,
$$
as asserted.
\end{proof}

\begin{lem}\label{C:Duhamel}
For any self-adjoint elements $a, b$ in a C*-algebra, one has
$$
\|e^{ia}-e^{ib}\|\leq \|a-b\|.
$$
\end{lem}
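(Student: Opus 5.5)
We will prove $\|e^{ia}-e^{ib}\|\leq \|a-b\|$ with the standard Duhamel (interpolation) formula, working in the unitization if the algebra is non-unital. The idea is to write the difference of the two exponentials as an integral of a derivative, and bound the integrand pointwise.

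Define the norm-differentiable function
$$F(t)=e^{ita}e^{i(1-t)b},\qquad t\in[0,1].$$
Then $F(1)-F(0)=e^{ia}-e^{ib}$. Both exponentials are given by absolutely convergent power series, so $F$ can be differentiated termwise. The product rule then gives
$$F'(t)=e^{ita}\,(ia)\,e^{i(1-t)b}+e^{ita}\,(-ib)\,e^{i(1-t)b}=i\,e^{ita}(a-b)e^{i(1-t)b}.$$
Here we use that $a$ commutes with $e^{ita}$ and $b$ commutes with $e^{i(1-t)b}$. The fundamental theorem of calculus holds for norm-continuously differentiable functions into a Banach space, for instance by pairing with bounded linear functionals. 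It yields
$$e^{ia}-e^{ib}=i\int_0^1 e^{ita}(a-b)e^{i(1-t)b}\,dt.$$

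Since $a$ and $b$ are self-adjoint, $e^{ita}$ and $e^{i(1-t)b}$ are unitaries, so each has norm $1$. Hence the integrand has norm at most $\|a-b\|$ for every $t$. Integrating over $[0,1]$ gives the claimed bound.

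The argument contains no real obstacle. The only points needing care are justifying the derivative of the exponential, which termwise differentiation of the power series handles, and the Banach-space-valued fundamental theorem of calculus.
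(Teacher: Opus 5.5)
Your proof is correct and follows the same route as the paper, which obtains the bound from Duhamel's formula $e^{ia}-e^{ib}=i\int_0^1 e^{i(1-s)a}(a-b)e^{isb}\,ds$ (your formula after substituting $s=1-t$). The only difference is that you derive this identity directly by differentiating $F(t)=e^{ita}e^{i(1-t)b}$, whereas the paper cites it; the unitary norm estimate that finishes the argument is the same.
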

\begin{proof}
By Duhamel's formula (see, for instance, \cite{Engel}, pages 161, 162), 
$$
e^{ia}-e^{ib}=i\int_0^1e^{i(1-s)a}(a-b)e^{isb}ds.
$$
The lemma follows.
\end{proof}

 \begin{lem}\label{recover-q}
With $D$ as in \eqref{defn-D}, one has
$$
q\in D.
$$
\end{lem}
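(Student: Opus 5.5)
The plan is to obtain $q$ as the fixed point of a contraction whose iterates stay inside $D$, using Lemma \ref{recover-w-0} and Lemma \ref{C:Duhamel}.

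\emph{Step 1: a second element of $D$ tied to $w_{-1}$.} Since $D$ is $\gamma$-invariant, Lemma \ref{recover-w-0} gives $\gamma^{-1}(w_0)\in D$. From $\gamma=\beta\circ\mathrm{Ad}(q)$ we get $\gamma^{-1}=\mathrm{Ad}(q^*)\circ\beta^{-1}$, so
$$x:=\gamma^{-1}(w_0)=q^*w_{-1}q\in D, \qquad\text{that is,}\qquad w_{-1}=qxq^*.$$
Substituting this into the definition \eqref{per-ger} of $q$ shows that $q$ solves the implicit equation
$$q=\exp\!\Big(\tfrac{i}{10}\big(qxq^*w_0+w_0^*qx^*q^*\big)\Big).$$

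\emph{Step 2: the contraction.} Let $\mathcal U$ be the unitary group of $B$, which is complete in norm. Define $F:\mathcal U\to\mathcal U$ by
$$F(y)=\exp\!\Big(\tfrac{i}{10}\big(yxy^*w_0+w_0^*yx^*y^*\big)\Big).$$
The exponent is $\tfrac{i}{10}$ times a self-adjoint element, so $F(y)$ is unitary, and Step 1 says $F(q)=q$. For unitaries $y,y'$ we have $\|yxy^*-y'xy'^*\|\le 2\|y-y'\|$, because $x$ is unitary. Lemma \ref{C:Duhamel} then gives
$$\|F(y)-F(y')\|\le \tfrac1{10}\cdot 2\cdot 2\|y-y'\| = \tfrac25\|y-y'\|.$$
Hence $F$ is a strict contraction on $\mathcal U$. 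By the Banach fixed point theorem its fixed point is unique, so it is $q$, and $F^n(y_0)\to q$ for any starting point $y_0\in\mathcal U$.

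\emph{Step 3: the iterates lie in $D$.} Start from $y_0=1\in D$. If $y\in D$ is unitary, then $F(y)\in D$, since $x,w_0\in D$ and $D$ is a C*-algebra closed under continuous functional calculus. By induction every iterate $F^n(1)$ lies in $D$. Since $D$ is norm closed, $q=\lim_n F^n(1)\in D$.

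The main point requiring care is the contraction estimate: the constant $\frac{1}{10}$ in $q$ must be small enough that the Lipschitz bound from Lemma \ref{C:Duhamel} is below $1$. The bound obtained is $\frac25$, so the argument closes. The other key observation is choosing $\gamma^{-1}(w_0)$ rather than $\gamma(w_0)$, because this produces an expression for $w_{-1}$, the same variable that occurs in $q$.
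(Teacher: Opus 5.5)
Your proof is correct and is essentially the paper's argument. The paper uses the same element $q^*w_{-1}q=\gamma^{-1}(w_0)\in D$ and the same $\frac{2}{5}$-contraction estimate from Lemma~\ref{C:Duhamel}, with the iterates staying in $\mathrm{C}^*(w_0,q^*w_{-1}q)\subseteq D$; the one difference is which composition it iterates. Writing $Q(b)=\exp(\frac{i}{10}(bw_0+w_0^*b^*))$ and $R(y)=yxy^*$, the paper iterates $R\circ Q$, i.e.\ $b\mapsto Q(b)\,q^*w_{-1}q\,Q(b)^*$ on all of $B$ starting from $0$, gets $w_{-1}\in D$ first and then $q=Q(w_{-1})\in D$, whereas you iterate $F=Q\circ R$ on the unitary group from $1$ (legitimate, since $h\in D$ is invertible, so $D$ is unital) and reach $q$ directly.
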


\begin{proof}
By Lemma \ref{recover-w-0}, $w_0\in D$. Since $D$ is $\gamma$-invariant, one has (see \eqref{gamma-in-B})
\begin{equation}\label{qwq-in-D}
q^*w_{-1}q=q^*\beta^{-1}(w_0)q=\gamma^{-1}(w_0)\in D.
\end{equation}
Define the maps
$$
Q: B \ni b \mapsto \exp(\frac{i}{10}(bw_{0}+w_{0}^*b^*)) \in B$$
and
$$\Phi: B \ni b \mapsto Q(b)q^*w_{-1}qQ(b)^* \in B.$$
(Recall that by definition $w_n$ and $q$ are in $B$.)

Then, for $b_1, b_2\in B$, by Lemma \ref{C:Duhamel}, one has
\begin{eqnarray*}
\|\Phi(b_1)-\Phi(b_2)\|&= &\|Q(b_1)q^*w_{-1}qQ(b_1)^*-Q(b_2)q^*w_{-1}qQ(b_2)^*\|\\
&\leq & \|Q(b_1)q^*w_{-1}qQ(b_1)^*-Q(b_2)q^*w_{-1}qQ(b_1)^*\|\\
&&+\|Q(b_2)q^*w_{-1}qQ(b_1)^*-Q(b_2)q^*w_{-1}qQ(b_2)^*\|\\
&\leq & 2\|Q(b_1)-Q(b_2)\|\\
&\leq & \frac{1}{5}\|(b_1w_0+w_0^*b_1^*)-(b_2w_0+w_0^*b_2^*)\|\\
&\leq & \frac{2}{5}\|b_1-b_2\|.
\end{eqnarray*}
Note that, by \eqref{per-ger} (the definition of $q$), 
$$Q(w_{-1}) = \exp(\frac{i}{10}(w_{-1}w_0 + w_0^*w_{-1}^*)) = q, $$
and so
$$\Phi(w_{-1})= q q^*w_{-1}q q^* = w_{-1}.$$
Then, for each $n\in \mathbb{N}$, iterating the inequality above, one has
\begin{equation*}
\|\Phi^n(0)- w_{-1}\| =  \|\Phi^n(0)-\Phi^n(w_{-1})\|  \leq  (\frac{2}{5})^{n}\| 0-w_{-1}\|.
\end{equation*}
That is,
\begin{equation}\label{w-1-lim}
 w_{-1} = \lim_{n\to \infty} \Phi^n(0).
 \end{equation}

A straightforward calculation shows that
$$\Phi(w_0),\ \Phi(q^*w_{-1}q) \in \mathrm{C}^*(w_0, q^*w_{-1}q)$$
and
$$\Phi(0) \in  \mathrm{C}^*(w_0, q^*w_{-1}q).$$
Therefore,
$$\Phi^n(0) \in \mathrm{C}^*(w_0, q^*w_{-1}q), \quad n\in \mathbb N.$$
Hence, by \eqref{w-1-lim},
$$w_{-1} \in \mathrm{C}^*(w_0, q^*w_{-1}q). $$
Since $w_0 \in D$ (by Lemma \ref{recover-w-0}) and $q^*w_{-1}q \in D$ (see \eqref{qwq-in-D}), one has
$$\mathrm{C}^*(w_0, q^*w_{-1}q) \subseteq  D, $$
and therefore
$$
q=\exp(\frac{i}{10}(w_{-1}w_0+w_0^*w_{-1}^*))\in D,
$$
as desired.
\end{proof}

Let us prove the main theorem.
\begin{proof}[Proof of Theorem \ref{main-thm}]
By \eqref{h-in}, Lemma \ref{recover-q}, and \eqref{D-in-g},  respectively, one has $$h, q \in \mathrm{C}^*(g),$$
and therefore
$$u = uqhh^{-1}q^{-1} = gh^{-1}q^{-1} \in \mathrm{C}^*(g). $$
By Lemma \ref{recover-w-0} and \eqref{D-in-g}, $$v = w_0 \in D \subseteq \mathrm{C}^*(g).$$
Thus,
$$\mathrm{C}^*_r(\mathbb F_2) = \mathrm{C}^*(u, v) \subseteq \mathrm{C}^*(g) \subseteq \mathrm{C}^*_r(\mathbb F_2),$$
as stated.
\end{proof}

\begin{rem}
The formula \eqref{generator} was provided by ChatGPT. Is there some fundamental mechanism behind this formula?
\end{rem}

\section{Free entropy and free entropy dimension of the generator}\label{sec:free-entropy}

As a further revelation of the usefulness of ChatGPT, let us determine, or rather estimate the topological free entropy and topological free entropy dimension of $\mathrm{C}_r^*(g)$, with respect to the generator $g$ given by \eqref{generator}.

Let $\tau$ denote the canonical trace on $\mathrm{C}^*_r(\mathbb F_2)$, and retain the
notation
\[
 w_{-1}=u^{-1}vu,\qquad
 q=\exp\!\left(\frac{i}{10}
       (w_{-1}v+v^*w_{-1}^*)\right),\qquad
 h=3+v+v^*,\qquad
 g=uqh.
\]
Thus, $g^*g=h^2$ and $1\leq h\leq 5$. The microstates free entropy below is
computed in the tracial $W^*$-probability space $(L(\mathbb F_2),\tau)$.
Since microstates free entropy and topological free entropy are defined for self-adjoint
tuples, throughout this section we use the conventions
\[
 \chi(g)=\chi^{\mathrm{sa}}(\Re(g),\Im(g)),
 \qquad
 \chi_{\mathrm{top}}(g)
 =\chi_{\mathrm{top}}^{\mathrm{sa}}
       (\Re(g),\Im(g)),
\]
and similarly for the corresponding free entropy dimensions.  Equivalently, one
may use the non-self-adjoint microstate convention of
\cite{NicaShlyakhtenkoSpeicher}: its Euclidean measure on $\mathrm{M}_k(\mathbb C)$
and normalization term $\log k$ agree with the product Euclidean measure on
$(M_k^{\mathrm{sa}})^2$ and its normalization term $\log k$ under the map 
$T\mapsto(\Re T, \Im T)$.

Put
\begin{equation}
 c_{\mathrm U}:=\frac34+\frac12\log(2\pi)
 \label{eq:unitary-volume-constant}
\end{equation}
and
\begin{equation}
 \begin{split}
 \mathcal J
 &:=\frac{1}{(2\pi)^2}
     \int_0^{2\pi}\!\int_0^{2\pi}
       \log(3+\cos\theta+\cos\varphi)\,d\theta\,d\varphi \\
 &=\frac{1}{2\pi}\int_0^{2\pi}
   \log\!\left(
     \frac{3+\cos\theta+
       \sqrt{(3+\cos\theta)^2-1}}{2}
   \right)d\theta.
 \end{split}
 \label{eq:J-constant}
\end{equation}

\begin{thm}
\label{thm:entropy-of-g}
With Voiculescu's normalization of microstates free entropy and topological
free entropy, one has
\begin{equation}
 \mathcal J+2c_{\mathrm U}+\log\frac35
 \ \leq\ \chi_{\mathrm{top}}(g)
 \ \leq\ \chi(g)
 \ \leq\ \mathcal J+2c_{\mathrm U}.
 \label{eq:entropy-bounds}
\end{equation}
Consequently,
\begin{equation}
 \delta_{\mathrm{top}}(\Re g, \Im g)
 =\delta_0(\Re g, \Im g;\tau)
 =\delta(\Re g, \Im g;\tau)=2.
 \label{eq:entropy-dimensions-g}
\end{equation}
\end{thm}

The argument below gives bounds (those of \eqref{eq:entropy-bounds}) rather than the exact value of either entropy.
The gap in \eqref{eq:entropy-bounds} is $\log(5/3)$; it comes from a uniform
lower bound for the Jacobian of the twisting map $U\mapsto Uq(U,V)$.

Once $\chi_{\mathrm{top}}(g)$ is shown to be finite (\eqref{eq:entropy-bounds}), it follows from \cite{VoiculescuTopological} that the topological free dimension must be $2$.

\subsection{The upper bound of \eqref{eq:entropy-bounds}}

We first compare topological entropy and tracial microstates entropy.  Voiculescu's
free-capacity inequality \cite[Section~3]{VoiculescuTopological}, together
with uniqueness of the tracial state on $\mathrm{C}^*_r(\mathbb F_2)$ and the identification 
$\mathrm{C}^*(\Re g,\Im g)=\mathrm{C}^*_r(\mathbb F_2)$, gives
\begin{equation}
 \chi_{\mathrm{top}}(g)\leq \chi(g).
 \label{eq:top-less-than-tracial}
\end{equation}
Indeed, if sufficiently fine norm-microstate conditions did not force a
prescribed finite collection of $\tau$-moment conditions, a sequence of
counterexamples and a free ultrafilter would produce a tracial state on
$C_r^*(\mathbb F_2)$ different from $\tau$.

Let $r$ be a Haar unitary which is $*$-free from $h$.  The maximality theorem
for the microstates free entropy of $R$-diagonal elements
\cite[Theorem~2.1]{NicaShlyakhtenkoSpeicher} gives
\begin{equation}
 \chi(g)\leq \chi(rh)
 =\chi^{\mathrm{sa}}\!\left(\frac{h^2}{2}\right)+c_{\mathrm U}.
 \label{eq:R-diagonal-upper}
\end{equation}
We calculate the last one-variable entropy.  With respect to $\tau$, the
spectral distribution of $v$ is normalized Haar measure on the circle.  Set
\[
 H(\theta)=3+2\cos\theta,
 \qquad
 P(\theta)=\frac12 H(\theta)^2.
\]
The one-variable formula for free entropy yields
\begin{equation}
 \chi^{\mathrm{sa}}\!\left(\frac{h^2}{2}\right)
 =\frac{1}{(2\pi)^2}\int_0^{2\pi}\!\int_0^{2\pi}
       \log|P(\theta)-P(\varphi)|\,d\theta\,d\varphi
   +c_{\mathrm U}.
 \label{eq:one-variable-entropy-P}
\end{equation}
Now
\[
 P(\theta)-P(\varphi)
 =\bigl(H(\theta)-H(\varphi)\bigr)
   \frac{H(\theta)+H(\varphi)}{2}.
\]
Writing $\lambda=e^{i\theta}$ and $\mu=e^{i\varphi}$, we also have
\[
 H(\theta)-H(\varphi)
 = (\lambda-\mu)\left(1-\frac{1}{\lambda\mu}\right).
\]
The logarithmic potential of normalized Haar measure on the circle vanishes
on the circle.  Consequently,
\[
 \frac{1}{(2\pi)^2}\int_0^{2\pi}\!\int_0^{2\pi}
       \log|H(\theta)-H(\varphi)|\,d\theta\,d\varphi=0.
\]
The remaining term in \eqref{eq:one-variable-entropy-P} is precisely
$\mathcal J$.  Hence
\begin{equation}
 \chi^{\mathrm{sa}}\!\left(\frac{h^2}{2}\right)
 =\mathcal J+c_{\mathrm U}.
 \label{eq:entropy-modulus}
\end{equation}
Equations \eqref{eq:top-less-than-tracial},
\eqref{eq:R-diagonal-upper}, and \eqref{eq:entropy-modulus} prove the two
upper inequalities in \eqref{eq:entropy-bounds}.

\subsection{A Jacobian estimate}

Equip $M_k(\mathbb C)$ with the real Hilbert--Schmidt inner product
\[
 \langle A,B\rangle=\Re \operatorname{Tr}(AB^*)
\]
and use the induced Riemannian metric on $\mathrm U(k)$.  Let $\mu_k$ be
normalized Haar measure on $\mathrm U(k)$ and let $C_k$ be its Riemannian
volume.  Thus Riemannian volume is $C_k\mu_k$, and
\begin{equation}
 \lim_{k\to\infty}
 \left(\frac{1}{k^2}\log C_k+\frac12\log k\right)
 =c_{\mathrm U};
 \label{eq:unitary-volume-asymptotic}
\end{equation}
see, for example, \cite[Lemma~2.8]{NicaShlyakhtenkoSpeicher}.

For $U,V\in\mathrm U(k)$, define
\begin{equation}
 \begin{split}
 Z(U,V)&=U^*VUV,\\
 Q(U,V)&=\exp\!\left(\frac{i}{10}
                  (Z(U,V)+Z(U,V)^*)\right),\\
 H(V)&=3+V+V^*,\\
 \Phi_k(U,V)&=UQ(U,V)H(V).
 \end{split}
 \label{eq:matrix-model-map}
\end{equation}

\begin{lem}
\label{lem:twist-Jacobian}
For fixed $V\in\mathrm U(k)$, the map
\[
 \psi_V\colon \mathrm U(k)\longrightarrow\mathrm U(k),
 \qquad \psi_V(U)=UQ(U,V),
\]
is injective, and every singular value of its differential is at least
$3/5$.  Consequently, the real Jacobian of
\[
 \Theta_k(U,V)=(UQ(U,V),V)
\]
is at least $(3/5)^{k^2}$.
\end{lem}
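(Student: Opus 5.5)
The plan is to treat $\psi_V$ as a small perturbation of the identity map $U\mapsto U$, using the Hilbert--Schmidt version of Lemma~\ref{C:Duhamel}. The key estimate is a Lipschitz bound for $U\mapsto Q(U,V)$ with constant $2/5$; injectivity and the singular value bound both follow from it.

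First, I record the Hilbert--Schmidt analogue of Lemma~\ref{C:Duhamel}: for self-adjoint $a,b\in M_k(\mathbb C)$, $\|e^{ia}-e^{ib}\|_2\leq\|a-b\|_2$. This follows from the same Duhamel formula, since left and right multiplication by unitaries preserve $\|\cdot\|_2$. Next, for $U,U'\in\mathrm U(k)$, write
$$Z(U,V)-Z(U',V)=(U-U')^*VUV+U'^*V(U-U'),$$
which gives $\|Z(U,V)-Z(U',V)\|_2\leq 2\|U-U'\|_2$. Hence the exponents $\frac{1}{10}(Z+Z^*)$ differ by at most $\frac{4}{10}\|U-U'\|_2$, and
$$\|Q(U,V)-Q(U',V)\|_2\leq \tfrac25\|U-U'\|_2.$$

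For injectivity, suppose $UQ=U'Q'$, where $Q=Q(U,V)$ and $Q'=Q(U',V)$. Then $U=U'Q'Q^*$, so
$$\|U-U'\|_2=\|U'(Q'Q^*-1)\|_2=\|Q'-Q\|_2\leq\tfrac25\|U-U'\|_2,$$
which forces $U=U'$.

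For the differential, take a tangent vector $UX$ at $U$ with $X$ skew-Hermitian. Then
$$d\psi_V(UX)=UXQ+U\,dQ(UX).$$
Letting $U'\to U$ in the Lipschitz estimate gives $\|dQ(UX)\|_2\leq\frac25\|UX\|_2=\frac25\|X\|_2$. Since $\|UXQ\|_2=\|X\|_2$, we get
$$\|d\psi_V(UX)\|_2\geq(1-\tfrac25)\|X\|_2=\tfrac35\|X\|_2.$$
The metric is the one induced by the real inner product $\Re\operatorname{Tr}$, so this shows every singular value of $d\psi_V$ is at least $3/5$.

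For $\Theta_k$, its differential in block form with respect to $T(\mathrm U(k))\oplus T(\mathrm U(k))$ is upper triangular:
$$\begin{pmatrix} d\psi_V & \partial_V(UQ)\\ 0 & I\end{pmatrix}.$$
Its real Jacobian therefore equals $|\det d\psi_V|$. This determinant is the product of the $\dim_{\mathbb R}\mathrm U(k)=k^2$ singular values, each at least $3/5$, so the Jacobian is at least $(3/5)^{k^2}$.

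The only delicate point is making sure the derivative bound and the injectivity bound both use the same constant $2/5$. That constant comes from the factor $2$ in the $Z$-estimate, the factor $2$ from $Z+Z^*$, and the $1/10$ in the exponent. Beyond that, I expect no real obstacle; the rest is bookkeeping with unitary invariance of $\|\cdot\|_2$.
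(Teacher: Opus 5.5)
Your proof is correct and follows essentially the same route as the paper: the Hilbert--Schmidt Duhamel estimate gives a $\tfrac25$-Lipschitz bound for $U\mapsto Q(U,V)$, hence $\|\psi_V(U_1)-\psi_V(U_2)\|_2\geq\tfrac35\|U_1-U_2\|_2$, which yields injectivity and, after differentiating along curves, the singular value bound; block-triangularity of $D\Theta_k$ then gives the Jacobian estimate. One harmless typo: the second term in your expansion of $Z(U,V)-Z(U',V)$ should be $(U')^*V(U-U')V$, and this does not change the bound $2\|U-U'\|_2$.
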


\begin{proof}
For $U_1,U_2\in\mathrm U(k)$,
\[
 \|Z(U_1,V)-Z(U_2,V)\|_2
 \leq 2\|U_1-U_2\|_2.
\]
The Duhamel formula, valid for every unitarily invariant norm (see Lemma \ref{C:Duhamel} and proof), gives
$\|e^{iA}-e^{iB}\|_2\leq\|A-B\|_2$ for self-adjoint $A,B$.  It follows that
\[
 \|Q(U_1,V)-Q(U_2,V)\|_2
 \leq \frac25\|U_1-U_2\|_2.
\]
Therefore
\[
 \|\psi_V(U_1)-\psi_V(U_2)\|_2
 \geq \frac35\|U_1-U_2\|_2.
\]
This proves injectivity.  Differentiating the last inequality along smooth
curves shows that every singular value of $D\psi_V$ is at least $3/5$.
Since $\dim_{\mathbb R}\mathrm U(k)=k^2$ and $D\Theta_k$ is block triangular,
the asserted Jacobian bound follows.
\end{proof}

Set
\begin{equation}
 \mathcal P_k(V)=\frac12 H(V)^2.
 \label{eq:P-map}
\end{equation}

\begin{lem}
\label{lem:spectral-Jacobian}
Suppose that the eigenvalues of $V$ are $e^{i\theta_1},\ldots,e^{i\theta_k}$,
and put $h_j=3+2\cos\theta_j$.  At every regular point of $\mathcal P_k$,
its real Jacobian is
\begin{equation}
 \operatorname{Jac}\mathcal P_k(V)
 =\prod_{i,j=1}^k
   \frac{h_i+h_j}{2}
   \left|e^{i\theta_i}-e^{-i\theta_j}\right|.
 \label{eq:P-Jacobian}
\end{equation}
\end{lem}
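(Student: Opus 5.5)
The plan is to compute the differential of $\mathcal P_k$ directly in an eigenbasis of $V$. There it becomes a Schur (entrywise) multiplier, and its real determinant can be read off one matrix entry at a time. Since $\mathcal P_k$ is equivariant under unitary conjugation, $\mathcal P_k(WVW^*)=W\mathcal P_k(V)W^*$, and conjugation is an isometry both of $\mathrm U(k)$ with the metric induced from $\langle A,B\rangle=\Re\operatorname{Tr}(AB^*)$ and of $M_k^{\mathrm{sa}}$ with the Hilbert--Schmidt metric. So it suffices to compute the Jacobian at a diagonal $V=\mathrm{diag}(\lambda_1,\dots,\lambda_k)$ with $\lambda_j=e^{i\theta_j}$.

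First I will parametrize the tangent space at $V$ isometrically by $X\in M_k^{\mathrm{sa}}$ via $X\mapsto iVX$, which is isometric because $\|iVX\|_2=\|X\|_2$. A tangent vector $\delta V=iVX$ has $\delta V^*=-iXV^*$, so in coordinates
\[
 (\delta H)_{ij}=i\lambda_iX_{ij}-iX_{ij}\bar\lambda_j=i(\lambda_i-\bar\lambda_j)X_{ij}.
\]
Next I will differentiate $\mathcal P_k=\tfrac12H^2$ to get $\delta\mathcal P=\tfrac12(H\,\delta H+\delta H\,H)$. Because $H=\mathrm{diag}(h_1,\dots,h_k)$, this reads
\[
 (\delta\mathcal P)_{ij}=m_{ij}X_{ij},\qquad m_{ij}=i\,\frac{h_i+h_j}{2}\,(\lambda_i-\bar\lambda_j).
\]
One checks that $\overline{m_{ij}}=m_{ji}$, so the output is again self-adjoint, as it must be.

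The Jacobian is then computed on the orthogonal real decomposition of $M_k^{\mathrm{sa}}$. The first block consists of the $k$ diagonal coordinates $X_{jj}$, each multiplied by $m_{jj}$; note $m_{jj}$ is real, since $i(\lambda_j-\bar\lambda_j)=-2\sin\theta_j$. The second block consists of the complex coordinates $X_{ij}$ for $i<j$, each multiplied by the complex number $m_{ij}$. Each such coordinate spans a real $2$-plane, and the multiplication acts on it as a rotation–dilation by $|m_{ij}|$. The Hilbert--Schmidt norm counts $X_{ij}$ and $X_{ji}=\overline{X_{ij}}$ together, so this is a conformal map of that plane with real determinant $|m_{ij}|^2=|m_{ij}||m_{ji}|$. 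Multiplying over all blocks gives
\[
 \operatorname{Jac}\mathcal P_k(V)=\prod_{i,j=1}^k|m_{ij}|=\prod_{i,j}\frac{h_i+h_j}{2}\,\bigl|e^{i\theta_i}-e^{-i\theta_j}\bigr|,
\]
using $h_i+h_j>0$. At regular points all factors are nonzero, but the computation is valid as an identity at every $V$.

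The only delicate point is the bookkeeping in the second step: the same scaling factor for each plane is obtained whether the basis is normalized with $\sqrt2$ or with the standard Hilbert--Schmidt orthonormal basis of $M_k^{\mathrm{sa}}$, and the tangent parametrization must be checked to be isometric on both sides. Once the source and target metrics are matched, the determinant of a diagonal-in-coordinates map is routine.
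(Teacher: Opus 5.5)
Your proposal is correct and follows essentially the same route as the paper. Both reduce to diagonal $V$ by unitary equivariance, write tangent vectors as $VK$ with $K$ skew-adjoint (your $iVX$ with $K=iX$), note that $D\mathcal P_k(V)$ multiplies the $(i,j)$ entry of $K$ by $\frac{h_i+h_j}{2}(e^{i\theta_i}-e^{-i\theta_j})$, and then multiply the diagonal factors by the squared moduli on the off-diagonal real $2$-planes. Your uniform Schur-multiplier treatment of the diagonal entries, in place of the paper's differentiation of $h_i^2/2$ in $\theta_i$, and your explicit check that the parametrization is isometric on both sides, are only cosmetic refinements.
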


\begin{proof}
By unitary equivariance, it is enough to calculate at a diagonal $V$.
On the diagonal tangent direction which changes $\theta_i$, the absolute
derivative of $h_i^2/2$ is
\[
 2h_i|\sin\theta_i|
 =h_i|e^{i\theta_i}-e^{-i\theta_i}|.
\]
For $i<j$, write the tangent vector at $V$ as $VK$, where $K^*=-K$, and put
$K_{ij}=z$.  The corresponding $(i,j)$ entry of $D H(V)$ is
$(e^{i\theta_i}-e^{-i\theta_j})z$, and hence that of
$D\mathcal P_k(V)$ is
\[
 \frac{h_i+h_j}{2}
 (e^{i\theta_i}-e^{-i\theta_j})z.
\]
The determinant on this real two-dimensional off-diagonal tangent plane is
the square of the modulus of the displayed scalar.  Multiplication over all
diagonal directions and all pairs $i<j$ gives \eqref{eq:P-Jacobian}.
\end{proof}

The polar-coordinate map
\begin{equation}
 \mathrm U(k)\times M_k^{\mathrm{sa},+}\longrightarrow M_k(\mathbb C),
 \qquad (W,P)\longmapsto W(2P)^{1/2},
 \label{eq:polar-coordinate-map}
\end{equation}
has Jacobian one when the first factor has its Hilbert--Schmidt Riemannian
volume and the second and target have Euclidean Lebesgue measure.  Equivalently,
when normalized Haar measure is used on the first factor, the Jacobian factor
is $C_k$; see the polar-decomposition calculation in
\cite[Section~2]{NicaShlyakhtenkoSpeicher}.  The factorization
\[
 (U,V)\xrightarrow{\ \Theta_k\ }(UQ(U,V),V)
 \longmapsto (UQ(U,V),\mathcal P_k(V))
 \xrightarrow{\ \eqref{eq:polar-coordinate-map}\ }\Phi_k(U,V)
\]
therefore combines Lemmas~\ref{lem:twist-Jacobian} and
\ref{lem:spectral-Jacobian} into a lower bound for
$\operatorname{Jac}\Phi_k$.

\subsection{Phase averaging and the lower bound of \eqref{eq:entropy-bounds}}

For $V$ as in Lemma~\ref{lem:spectral-Jacobian}, put
\begin{align}
 A_k(V)
 &:=\frac{1}{k^2}\sum_{i,j=1}^k
       \log\!\left(\frac{h_i+h_j}{2}\right),
 \label{eq:A-k}\\
 B_k(V)
 &:=\frac{1}{k^2}\sum_{i,j=1}^k
       \log|e^{i\theta_i}-e^{-i\theta_j}|.
 \label{eq:B-k}
\end{align}
Thus
\begin{equation}
 \frac{1}{k^2}\log\operatorname{Jac}\mathcal P_k(V)
 =A_k(V)+B_k(V).
 \label{eq:log-P-Jacobian}
\end{equation}

If the empirical eigenvalue measures of $V_k$ converge weakly to normalized
Haar measure on the circle, then
\begin{equation}
 \sup_{t\in\mathbb R}
 |A_k(e^{it}V_k)-\mathcal J|\longrightarrow 0.
 \label{eq:A-uniform-limit}
\end{equation}
This follows because the function
$(\theta,\varphi)\mapsto\log(3+\cos\theta+\cos\varphi)$ is continuous, and
the family of its simultaneous translates is compact in
$C(\mathbb T^2)$.

The singular term $B_k$ is controlled by averaging the scalar phase.  For
every $V$ outside a null set,
\begin{equation}
 \frac{1}{2\pi}\int_0^{2\pi}B_k(e^{it}V)\,dt=0,
 \qquad B_k(e^{it}V)\leq\log 2.
 \label{eq:B-phase-average}
\end{equation}
Indeed, each summand has the form
\[
 \log|e^{i(\theta_i+\theta_j+2t)}-1|,
\]
the integral of which is zero.  It follows from \eqref{eq:B-phase-average} that,
for every $\eta>0$,
\begin{equation}
 \frac{1}{2\pi}
 \left|\{t\in[0,2\pi]:B_k(e^{it}V)\geq-\eta\}\right|
 \geq \frac{\eta}{\log 2+\eta}.
 \label{eq:good-phase-proportion}
\end{equation}

We now pass to topological microstates.  Let $U_k,V_k$ be independent
Haar-distributed unitary matrices.  Strong asymptotic freeness
\cite{CollinsMale} implies that $(U_k,V_k)$ converges almost surely strongly
to the canonical free Haar pair $(u,v)$.  The circle of automorphisms
\[
 \alpha_t(u)=u,\qquad \alpha_t(v)=e^{it}v
\]
preserves both the norm and the canonical trace, and
\[
 \Phi_k(U,e^{it}V)
\]
is the matrix expression corresponding to $\alpha_t(g)$.  Strong convergence,
a finite-net argument in $t$, and the uniform continuity of the expressions
in \eqref{eq:matrix-model-map} therefore have the following consequence.  For
every fixed topological microstate neighborhood of $g$, there are
phase-invariant Borel sets
\[
 \Omega_k\subseteq\mathrm U(k)^2,
 \qquad (\mu_k\times\mu_k)(\Omega_k)\longrightarrow 1,
 \label{eq:Omega-k}
\]
such that
\begin{enumerate}
 \item $\Phi_k(U,V)$ belongs to that microstate neighborhood whenever
       $(U,V)\in\Omega_k$;
 \item $A_k(e^{it}V)\geq\mathcal J-\eta$ for every $t$ whenever
       $(U,V)\in\Omega_k$, after increasing $k$ if necessary.
\end{enumerate}
Here phase-invariance means that $(U,V)\in\Omega_k$ if and only if
$(U,e^{it}V)\in\Omega_k$ for every $t$.

Define
\[
 E_k=\{(U,V)\in\Omega_k:B_k(V)\geq-\eta\}.
\]
By \eqref{eq:good-phase-proportion} and phase-invariance,
\begin{equation}
 (\mu_k\times\mu_k)(E_k)
 \geq\frac{\eta}{\log 2+\eta}
       (\mu_k\times\mu_k)(\Omega_k).
 \label{eq:E-k-measure}
\end{equation}
Moreover, \eqref{eq:log-P-Jacobian} gives
\begin{equation}
 \operatorname{Jac}\mathcal P_k(V)
 \geq \exp\bigl(k^2(\mathcal J-2\eta)\bigr),
 \qquad (U,V)\in E_k.
 \label{eq:P-Jacobian-lower}
\end{equation}

It remains to control multiplicity.  The polar decomposition of
$\Phi_k(U,V)$ determines $W=UQ(U,V)$ and $P=\mathcal P_k(V)$.  For almost
every $P$, its spectrum is simple.  For such a $P$, the equation
\[
 3+V+V^*=(2P)^{1/2}
\]
has at most $2^k$ solutions.  Indeed, $V$ commutes with $V+V^*$; hence it
preserves each one-dimensional spectral subspace of $(2P)^{1/2}-3$, and on
each such subspace there are at most two choices for its eigenvalue.  For each
fixed $V$, Lemma~\ref{lem:twist-Jacobian} shows that $U$ is uniquely determined
by $W$.  Thus the multiplicity of $\Phi_k$ is at most $2^k$ almost everywhere.

Let $\lambda_k$ denote Euclidean Lebesgue measure on $M_k(\mathbb C)$.  The
area formula, the two Jacobian lemmas, \eqref{eq:P-Jacobian-lower}, and
\eqref{eq:E-k-measure} give
\begin{equation}
 \begin{split}
 \lambda_k(\Phi_k(E_k))
 &\geq 2^{-k}\left(\frac35\right)^{k^2}
       e^{k^2(\mathcal J-2\eta)} C_k^2
       (\mu_k\times\mu_k)(E_k).
 \end{split}
 \label{eq:image-volume-lower}
\end{equation}
Since $\Phi_k(E_k)$ is contained in the prescribed topological microstate
neighborhood of $g$, we take $k^{-2}\log$, add the normalization term
$\log k$, use \eqref{eq:unitary-volume-asymptotic}, and then let
$k$ tend to $\infty$.  The factors $2^{-k}$ and
$(\mu_k\times\mu_k)(E_k)$ do not contribute on the $k^2$ scale.  We obtain
\[
 \chi_{\mathrm{top}}(g)
 \geq \mathcal J-2\eta+2c_{\mathrm U}+\log\frac35.
\]
Letting $\eta\downarrow0$ proves the lower inequality in
\eqref{eq:entropy-bounds}.

Finally, finite topological free entropy for a two-variable self-adjoint tuple
forces its topological free entropy dimension to be $2$
(\cite{VoiculescuTopological}).  Likewise, finite microstates free entropy
forces both $\delta$ and $\delta_0$ to equal the number of self-adjoint
variables (\cite{VoiculescuEntropyII}).  This proves
\eqref{eq:entropy-dimensions-g}.

%

\appendix

\section{Proof of Theorem \ref{CE}}

We follow the argument of \cite{Choi}, proving that $\mathrm{C}^*_r(\mathbb Z_2 \ast \mathbb Z_3)$ is simple, has unique trace, and is not quasidiagonal  (compare also the early paper of Powers, \cite{Powers}). First, recall the following lemma:
\begin{lem}[Lemma 2.2 of \cite{Choi}]\label{abs-avg}
Let $\mathcal H = \mathcal M \oplus \mathcal M^\perp$ be a direct sum decomposition of a Hilbert space $\mathcal H$. Suppose that $x \in \mathbb B(\mathcal H)$ has the form $\left( \begin{array}{cc} 0 & \ast \\ \ast & \ast \end{array} \right)$ and $u_i \in \mathbb B(\mathcal H)$, $i=1, ..., n$, are unitaries such that $u_iu_j^*$ have the form $\left( \begin{array}{cc} \ast & \ast \\ \ast & 0 \end{array} \right)$ when $i \neq j$. Then 
$$ \| \sum_{i=1}^n u_ixu_i \| \leq 2 \sqrt{n} \| x\|.$$
\end{lem}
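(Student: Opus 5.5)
I would prove this by splitting $x$ into two pieces, each of which has a compression by $1-P$ on one side, and then bounding each resulting sum by factoring it as a product of a row operator and a column operator over $\mathcal H^{n}$. Throughout, $P$ denotes the projection onto $\mathcal M$.

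The hypothesis on the $u_iu_j^*$ constrains products of the form $u_i(\cdot)u_j^*$. So I will read the sum as $\sum_i u_i^* x u_i$, the conjugation compatible with that hypothesis. I take this to be the intended meaning of the displayed $u_ixu_i$.

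\emph{Step 1 (splitting $x$).} The hypothesis on $x$ says exactly $PxP=0$. Hence
$$x=(1-P)x+Px(1-P),$$
and therefore
$$\sum_i u_i^*xu_i=\sum_i u_i^*(1-P)\,(xu_i)+\sum_i (u_i^*Px)\,(1-P)u_i .$$
It suffices to show that each of the two sums on the right has norm at most $\sqrt n\,\|x\|$.

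\emph{Step 2 (first sum).} Write the first sum as $RC$. Here $R=[\,u_1^*(1-P),\dots,u_n^*(1-P)\,]\colon\mathcal H^n\to\mathcal H$ is a row operator, and $C$ is the column with entries $xu_1,\dots,xu_n$.
\begin{itemize}
\item $\|C\|^2=\|C^*C\|=\|\sum_i u_i^*x^*xu_i\|\le n\|x\|^2$.
\item $\|R\|^2=\|R^*R\|$, and $R^*R$ is the $n\times n$ operator matrix with entries $(1-P)u_iu_j^*(1-P)$.
\item By hypothesis these entries vanish for $i\neq j$, so $R^*R=\mathrm{diag}(1-P,\dots,1-P)$ and $\|R\|\le1$.
\end{itemize}
Hence the first sum has norm at most $\sqrt n\,\|x\|$.

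\emph{Step 3 (second sum).} Write the second sum as $R'C'$, with $R'=[\,u_1^*Px,\dots,u_n^*Px\,]$ and $C'$ the column with entries $(1-P)u_1,\dots,(1-P)u_n$.
\begin{itemize}
\item $\|R'\|^2=\|R'R'^*\|=\|\sum_i u_i^*Pxx^*Pu_i\|\le n\|x\|^2$.
\item $\|C'\|^2=\|C'C'^*\|$, and $C'C'^*$ has entries $(1-P)u_iu_j^*(1-P)$.
\item As in Step 2, these vanish off the diagonal, so $C'C'^*$ is block diagonal with entries $1-P$ and $\|C'\|\le1$.
\end{itemize}
Hence the second sum also has norm at most $\sqrt n\,\|x\|$, and adding the two bounds gives $2\sqrt n\,\|x\|$.

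The only delicate point is the bookkeeping in Steps 2 and 3. In each factorization one must choose the Gram product ($R^*R$ versus $R'R'^*$, and $C'C'^*$ versus $C'^*C'$) whose off-diagonal entries are exactly the compressions $(1-P)u_iu_j^*(1-P)$ killed by the hypothesis. The wrong choice would produce the uncontrolled products $u_i^*(1-P)u_j$ instead. Apart from this, the argument is routine.
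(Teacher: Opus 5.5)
Your proof is correct. The paper gives no argument of its own for this lemma; it is simply quoted from Choi. What you wrote is the standard proof behind it. After the splitting $x=(1-P)x+Px(1-P)$, the hypothesis says precisely that the projections $u_i^*(1-P)u_i$ onto $u_i^*\mathcal M^\perp$ are mutually orthogonal. So each of the two sums consists of $n$ terms with mutually orthogonal ranges (respectively, supports), and Pythagoras/Cauchy--Schwarz gives $\sqrt n\,\|x\|$ for each. Your bounds $\|R\|\le 1$ and $\|C'\|\le 1$ are exactly this orthogonality written in row/column form.

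Your reading of the display as $\sum_i u_i^*xu_i$ is the right repair of the typo, not just a convenience. The literal $\sum_i u_ixu_i$ fails for $n\ge 5$. Take $\mathcal M^\perp=\mathcal K\otimes e_0$ and $\mathcal M=\mathcal K\otimes\mathrm{span}\{e_1,\dots,e_n\}$ with $\dim\mathcal K=\infty$. Let $v$ be a symmetry exchanging $\mathcal M$ and $\mathcal M^\perp$, and let $s_i$ be the symmetry swapping $\mathcal K\otimes e_0$ and $\mathcal K\otimes e_i$. Then $u_i=s_iv$ satisfy $u_iu_j^*=s_is_j$, which vanishes in the lower-right corner for $i\neq j$. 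Yet $\sum_i u_ivu_i=nv$, so the bound fails. Your version also covers the paper's use in Lemma~\ref{L:norm}: take $u_k=d^{-k}$, so that $\sum_k u_k^*xu_k=\sum_k d^kxd^{-k}$ and $u_iu_j^*=d^{j-i}\in\ker\tau_M$.
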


Let us also recall the construction of the reduced free product of tracial C*-algebras. Let $(M, \tau_M)$ and $(N, \tau_N)$ be unital tracial C*-algebras with faithful GNS
representations. 
A reduced word is a product $a_1\cdots a_n$, where each $a_j$ is contained in $\ker \tau_M$ or $\ker \tau_N$, and adjacent letters are contained in different factors.
The canonical conditional expectation $\mathbb{E}_M: A\rightarrow M$ is defined so that
$\mathbb{E}_M(a)=a$ for each $a\in M$ and $\mathbb{E}_M(b)=0$ for each reduced word containing
at least a letter in $\ker\tau_N$.

For $k\in\{M, N\}$, set
$$
H_k=L^2(k,\tau_k),\quad H_k'=H_k\ominus \mathbb{C}\widehat{1},
$$
where $\widehat{a}$ denotes the GNS vector of $a\in k$, and the inner product in $H_k$ is of course
$$
\langle \widehat{a}, \widehat{b}\rangle :=\tau_k(b^*a).
$$
The reduced free-product Hilbert space is
\begin{equation}\label{eq:Fock}
\mathcal{F}=\mathbb{C}\Omega\oplus\bigoplus_{\substack{
 n\in \mathbb{N},~~k_1,\ldots,k_n\in\{M,N\}\\
 k_j\neq k_{j+1}(1\leq j<n) }
}H'_{k_1}\otimes\cdots\otimes H_{k_n}',
\end{equation}
where $\Omega$ is a unit vector which is called the vacuum.

The (reduced) free-product C*-algebra 
$$
 (B,\tau)=(M,\tau_M)*(N,\tau_N)
$$
is the C*-algebra generated by
the left actions of $M$ and $N$ on $\mathcal{F}$. In fact,
if $a\in \ker\tau_k$, then
\begin{equation*}
a\Omega=\widehat{a},\quad a(\xi_1\otimes\cdots\otimes \xi_n)=\widehat{a}\otimes \xi_1\otimes\cdots\otimes \xi_n,\quad \text{when} \ \
\xi_1\in H_j',\  j\neq k.
\end{equation*}


\begin{lem}\label{L:norm}
Let
$$
 (A,\tau)=(M,\tau_M)*(N,\tau_N)
$$
be a reduced free product of unital tracial C*-algebras with faithful GNS
representations.
Let $d\in M$ be a Haar unitary.
If $x=a_1\cdots a_n$ is a reduced word with $a_1 \in \ker\tau_N$, then for each $L\in \mathbb{N}$, we have
$$
\|\sum_{k=1}^Ld^kxd^{-k}\|\leq 2\sqrt{L}\|x\|.
$$
\end{lem}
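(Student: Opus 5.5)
The plan is to apply Lemma~\ref{abs-avg} with $u_i=d^i$, $i=1,\dots,L$, acting on the free-product Hilbert space $\mathcal F$ of \eqref{eq:Fock}. The Lemma's statement presumably intends $\sum_i u_ixu_i^*$, and we use it in that form. The work is to choose a splitting $\mathcal F=\mathcal M\oplus\mathcal M^\perp$ such that two conditions hold:
\begin{itemize}
\item[(a)] $P_{\mathcal M}\,x\,P_{\mathcal M}=0$;
\item[(b)] for $i\neq j$, the operator $u_iu_j^*=d^{i-j}$ satisfies $P_{\mathcal M^\perp}\,d^{m}\,P_{\mathcal M^\perp}=0$ for all $m\neq 0$.
\end{itemize}

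The choice will be $\mathcal M^\perp=\bigoplus H'_N\otimes\cdots$, the span of the elementary tensors whose first letter lies in $H'_N$, and $\mathcal M=\mathbb C\Omega\oplus\bigoplus H'_M\otimes\cdots$.

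For (b), since $d$ is Haar we have $d^m\in\ker\tau_M$ for $m\neq0$. On a tensor beginning with a letter from $H'_N$, $d^m$ simply prepends $\widehat{d^m}\in H'_M$. Hence $d^m\mathcal M^\perp\subseteq\mathcal M$, which is exactly (b).

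For (a), first treat the case where the last letter $a_n$ lies in $\ker\tau_N$. Take a vector in $\mathcal M$, i.e.\ $\Omega$ or a tensor starting in $H'_M$. The letters $a_n,a_{n-1},\dots,a_1$ alternate between the two algebras, and the first one applied, $a_n$, comes from the algebra other than that of the leading letter. So each letter only prepends, without contraction. The result therefore begins with $\widehat{a_1}\in H'_N$ and lies in $\mathcal M^\perp$, which gives (a). Lemma~\ref{abs-avg} then yields $\|\sum_{k=1}^L d^kxd^{-k}\|\le 2\sqrt L\|x\|$.

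The remaining case, and the step I expect to be the main obstacle, is $a_n\in\ker\tau_M$. Here $a_n$ can contract against a leading $H'_M$ letter, producing components in $\mathbb C\Omega$ or in tensors starting with $H'_N$, so (a) fails for the naive splitting. I will try two routes.

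\emph{First route.} Use that $M$ is abelian: writing $x=ya_n$ with $y$ ending in an $N$-letter, we get
$$d^kxd^{-k}=d^kyd^{-k}\,a_n .$$
So one can run the argument in the proof of Lemma~\ref{abs-avg} directly on vectors of the form $a_n\xi$. Concretely, I would redo Choi's estimate by splitting
$$x=P_{\mathcal M}x(1-Q)+\cdots$$
with a second projection $Q$ on the right, namely the projection onto the range of $d^{-k}$ applied to $\mathcal M^\perp$, and check that both halves of the Cauchy--Schwarz estimate still go through, each bounded by $\sqrt L\|x\|$.

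\emph{Second route.} Alternatively, replace $\mathcal M$ by a splitting adapted to the right end of the word: the span of tensors whose letter adjacent to the action is compatible with $a_n$, e.g.\ conjugating the picture by the right-hand structure. The aim is again that $x$ maps $\mathcal M$ into $\mathcal M^\perp$ while $d^m$, $m\neq0$, still maps $\mathcal M^\perp$ into $\mathcal M$.

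In either route, the key point to verify is that orthogonality of the ranges $d^{k}\mathcal M^\perp$ for distinct $k$ survives. This is where the Haar property $\tau(d^m)=0$ is used a second time.
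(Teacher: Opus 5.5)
Your first case ($a_n\in\ker\tau_N$) is correct and is exactly the paper's argument. Putting $\Omega$ into $\mathcal M$ rather than $\mathcal M^\perp$ changes nothing, and you are right that Lemma~\ref{abs-avg} should read $\sum_i u_ixu_i^*$. The gap is the case $a_n\in\ker\tau_M$, which you leave as two plans that are never carried out.

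Route~1 uses that $M$ is abelian, which is not a hypothesis of this lemma (only of Theorem~\ref{CE}). Even granting it, $d^kxd^{-k}=d^kyd^{-k}a_n$ together with your first case gives only $\|\sum_k d^kxd^{-k}\|\le 2\sqrt L\,\|y\|\,\|a_n\|$. In a reduced free product $\|ya_n\|$ can be strictly smaller than $\|y\|\,\|a_n\|$ (already for two free semicircular elements), so this is weaker than the claim. Your projection $Q$ depends on $k$, so the row/column estimate behind Lemma~\ref{abs-avg} does not apply to it as it stands.

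Route~2 is only a goal, and the obstruction is genuine. For instance, keep $P=P_{\mathcal M}$ on the left and change only the right-hand side. Any projection $F$ with $PxF=0$ must then satisfy $\operatorname{ran}(1-F)\supseteq x^*\mathcal M$, where $x^*\zeta=\widehat{a_n^*}\otimes\widehat{a_{n-1}^*}\otimes\cdots\otimes\widehat{a_1^*}\otimes\zeta$ for $\zeta\in\mathcal M$. For $\xi=\widehat{a_n^*}\otimes\eta$ of this form one computes $\langle d^m\xi,\xi\rangle=\tau(d^ma_n^*a_n)\|\eta\|^2$. This is nonzero e.g.\ for $a_n=d+d^2$ and $m=1$. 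So the translates $d^k(1-F)d^{-k}$ are not mutually orthogonal, and Choi's argument cannot be run on that side.

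You were right to distrust this case. The paper's proof just asserts $P_{\mathcal M}xP_{\mathcal M}=0$ for every reduced word beginning in $\ker\tau_N$, and that is false when $a_n\in\ker\tau_M$. For example, take $\mathrm{C}^*_r(\mathbb F_2)=\mathrm{C}^*(\lambda(u))*\mathrm{C}^*(\lambda(v))$ with $d=\lambda(u)$ and $x=\lambda(vu)$. Both $\delta_{u^{-1}v^{-1}u}$ and $x\delta_{u^{-1}v^{-1}u}=\delta_u$ lie in $\mathcal M$. So the paper has the same hole.

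What saves the paper is that the lemma is used only in the proof of Theorem~\ref{CE}, and there only for words $x_j$ that begin \emph{and end} in $\ker\tau_N$. For those words your first case is a complete proof. The right repair is to add the hypothesis $a_n\in\ker\tau_N$ to the statement and drop your second case. As stated, for words ending in $\ker\tau_M$, the lemma is established neither by your proposal nor by the paper's proof.
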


\begin{proof}
Consider 
$$ 
\mathcal M := \overline{ \mathrm{span}\{ \xi_{1} \otimes \cdots \otimes \xi_{m}: \xi_{1} \in H'_M,\ m\in\mathbb N \} }.
$$
Then
$$ 
\mathcal M^\perp = \overline{ \mathrm{span}\{\Omega,\ \xi_{1} \otimes \cdots \otimes \xi_{m}: \xi_{1} \in H_N',\ m \in \mathbb N\} }.
$$
Since $x = a_1\cdots a_n$ with $a_1 \in \ker\tau_N$, one has that $x$ has the form $\left( \begin{array}{cc} 0 & \ast \\ \ast & \ast \end{array} \right)$ with respect to $\mathcal F = \mathcal M \oplus \mathcal M^\perp $. Also note that, for each $i\neq j$,  the element $d^i(d^j)^* = d^{i-j} \in \ker\tau_M$ has the form $\left( \begin{array}{cc} \ast & \ast \\ \ast & 0 \end{array} \right)$ with respect to $\mathcal F = \mathcal M \oplus \mathcal M^\perp $. Thus, the desired statement follows from Lemma \ref{abs-avg} directly.
\end{proof}

\begin{proof}[Proof of Theorem \ref{CE}]
For $L\in \mathbb{N}$, define
$$
A_L(a)=\frac{1}{L}\sum_{k=1}^Ld^kad^{-k},\quad a\in A.
$$
For any $b$ in the algebraic $*$-algebra generated by $M$ and $N$, it can be written as the following form
$$
b=m+\sum_{j=1}^rm_{0,j}x_jm_{1,j},
$$
where $m, m_{0,1}, m_{1,j}\in M$ and each $x_j$ is a reduced word beginning and ending in $\ker\tau_N$.
Hence, by Lemma \ref{L:norm} and noting that $M$ is abelian, one has 
\begin{eqnarray*}
\|A_L(b)-\mathbb{E}_M(b)\| & = & \|(m-\sum_{j=1}^r m_{0, j} A_L(x_j)m_{1,j})-m\| \\
& \leq & \frac{2}{\sqrt{L}}\sum_{j=1}^r\|m_{0,j}\|\|x_j\|\|m_{1,j}\|\rightarrow0,
\end{eqnarray*}
as $L \to\infty$. Since $A_L$ and $\mathbb{E}_M$ are contractions, we have
$$
\|A_L(a)-\mathbb{E}_M(a)\|\rightarrow0, \quad a \in A,
$$
as desired.
\end{proof}

\bibliographystyle{plainurl}

\end{document}